\documentclass[a4paper,11pt]{article}
\usepackage{amsmath}
\usepackage{amsthm}
\usepackage{amsfonts}
\usepackage{amssymb}
\title{Minimal weight in union-closed families}
\author{Victor Falgas--Ravry\footnote{School of Mathematical Sciences, Queen Mary,
University of London, London E1 4NS, England }}

\begin{document}
\maketitle
\renewcommand{\baselinestretch}{1.1}

\newtheorem{theorem}{Theorem}
\newtheorem{lemma}[theorem]{Lemma}
\newtheorem{corollary}[theorem]{Corollary}
\newtheorem*{quotedresult}{Theorem}
\newtheorem{conjecture}{Conjecture}
\newtheorem*{conjecture*}{Conjecture}
\newtheorem*{definition}{Definition}
\newtheorem*{claim}{Claim}
\newtheorem*{remark}{Remark}
\begin{abstract}

Let $\Omega$ be a finite set and let $\mathcal{S} \subseteq \mathcal{P}(\Omega)$ be a set system on $\Omega$. For $x\in \Omega$, we denote by $d_{\mathcal{S}}(x)$ the number of members of $\mathcal{S}$ containing $x$. A long-standing conjecture of Frankl \cite{frankl} states that if $\mathcal{S}$ is union-closed then there is some $x\in \Omega$ with $d_{\mathcal{S}}(x)\geq \frac{1}{2}|\mathcal{S}|$.

We consider a related question. Define the \emph{weight} of a family $\mathcal{S}$ to be $w(\mathcal{S}) := \sum_{A \in \mathcal{S}} |A|$. Suppose $\mathcal{S}$ is union-closed. How small can $w(\mathcal{S})$ be? Reimer \cite{reimer} showed 
\[w(\mathcal{S}) \geq \frac{1}{2} |\mathcal{S}| \log_2  |\mathcal{S}|,\]
and that this inequality is tight.
In this paper we show how Reimer's bound may be improved if we have some additional information about the domain $\Omega$ of $\mathcal{S}$: if $\mathcal{S}$ separates the points of its domain, then 
\[w(\mathcal{S})\geq \binom{|\Omega|}{2}.\] 
This is stronger than Reimer's Theorem when $\Omega > \sqrt{|\mathcal{S}|\log_2 |\mathcal{S}|}$. In addition we construct a family of examples showing the combined bound on $w(\mathcal{S})$ is tight except in the region $|\Omega|=\Theta (\sqrt{|\mathcal{S}|\log_2 |\mathcal{S}|})$, where it may be off by a multiplicative factor of $2$.

Our proof also gives a lower bound on the average degree: if $\mathcal{S}$ is a point-separating union-closed family on $\Omega$, then
\[ \frac{1}{|\Omega|} \sum_{x \in \Omega} d_{\mathcal{S}}(x) \geq \frac{1}{2} \sqrt{|\mathcal{S}| \log_2 |\mathcal{S}|}+ O(1),\]
and this is best possible except for a multiplicative factor of $2$.
\end{abstract}

\section{Introduction}
Let $\Omega$ be a finite set. We may identify $X \subseteq \Omega$ with its characteristic function 
and consider a collection of subsets of $\Omega$ as a family of functions from $\Omega$ into $\{0,1\}$. For such a family $\mathcal{S}\subseteq \mathcal{P}(\Omega)$, we refer to $\Omega =\Omega(S)$ as the \emph{domain} of $\mathcal{S}$. Note that the domain of a set system $\mathcal{S}$ is not uniquely determined by knowledge $\mathcal{S}$. 
Therefore when we speak of `a set system $\mathcal{S}$', we shall in fact mean `a pair $(\mathcal{S}, \Omega)$, where $\mathcal{S} \subseteq \mathcal{P}(\Omega)$' so that the domain of $\mathcal{S}$ is implicitly specified.

We also let $V(S):= \bigcup_{A\in \mathcal{S}} A$ be the set of all elements $x\in\Omega$ which appear as a member of at least one set $A\in\mathcal{S}$. For $x\in \Omega$ we denote by $d_{\mathcal{S}}(x)$ the number of members of $\mathcal{S}$ containing $x$. We call $d_{\mathcal{S}}(x)$ the \emph{degree} of $x$ in $\mathcal{S}$.

A set system $\mathcal{S}$ is \emph{union-closed} if it is closed under pairwise unions. This is essentially the same as being closed under arbitrary unions except that we do not require $\mathcal{S}$ to contain the empty set. In 1979, Frankl~\cite{frankl} made a simple-sounding conjecture on the maximal degree in a union-closed family. This remains open and has become known as the Union-closed sets conjecture:

\begin{conjecture}[Union-closed sets conjecture]\label{ucconj}
Let $\mathcal{S}$ be a set system on some finite set $\Omega$. Then there is an element $x\in \Omega$ which is contained in at least half of the members of $\mathcal{S}$.
\end{conjecture}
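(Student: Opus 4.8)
The plan is to reduce the statement to controlling the maximum degree, since the conclusion says exactly that $\max_{x\in V(\mathcal{S})} d_{\mathcal{S}}(x) \geq \tfrac12|\mathcal{S}|$. The first thing I would try is the averaging heuristic: $\sum_{x\in V(\mathcal{S})} d_{\mathcal{S}}(x) = w(\mathcal{S})$, so the average degree is $w(\mathcal{S})/|V(\mathcal{S})|$, and if this were always at least $\tfrac12|\mathcal{S}|$ we would be done by pigeonhole. Combined with Reimer's bound $w(\mathcal{S}) \geq \tfrac12|\mathcal{S}|\log_2|\mathcal{S}|$, this even settles the conjecture whenever $|V(\mathcal{S})| \leq \log_2|\mathcal{S}|$. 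The first thing I would then verify, however, is that this is in general far too weak: the power set $\mathcal{P}(\Omega)$ is the boundary case with average degree exactly $\tfrac12|\mathcal{S}|$, and natural truncations drop strictly below it, so no global weight or averaging argument can succeed. The constant $\tfrac12$ in the conclusion is genuinely a statement about the \emph{maximum} degree, not the mean.

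The second approach I would pursue is induction by conditioning on a single element $x\in V(\mathcal{S})$. Writing $\mathcal{S}_x = \{A\in\mathcal{S} : x\in A\}$ and $\mathcal{S}_{\bar x} = \{A\in\mathcal{S} : x\notin A\}$, union-closure forces $\mathcal{S}_{\bar x}$ to be union-closed, and deleting $x$ gives a projected family on which one might recurse in $|V(\mathcal{S})|$ or $|\mathcal{S}|$. The idea is to locate a structurally special element — one lying in a singleton or two-element member of $\mathcal{S}$, or a minimal nonempty set — and show that removing or contracting it preserves union-closure while tracking degrees; here I would lean on known reductions such as ``if $\mathcal{S}$ contains a member of size at most two then the conjecture holds'' and try to bootstrap these local configurations. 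The obstruction is that union-closure is a global condition that does not localize cleanly: conditioning on $x$ destroys the closure structure linking $\mathcal{S}_{\bar x}$ to the rest of the ground set, and no such recursion has ever been closed.

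The most promising modern route, and the one I would concentrate on, is the entropy method. Let $A$ and $B$ be independent uniformly random members of $\mathcal{S}$ and set $p_x = \Pr[x\in A] = d_{\mathcal{S}}(x)/|\mathcal{S}|$. Since $A\cup B\in\mathcal{S}$ we have $H(A\cup B) \leq \log_2|\mathcal{S}| = H(A)$, while coordinatewise $\Pr[x\in A\cup B] = 2p_x - p_x^2$. Assuming for contradiction that $p_x < \tfrac12$ for every $x$, the binary entropy $h(2p_x - p_x^2)$ strictly exceeds $h(p_x)$, and a suitable approximate-independence (submodularity) argument aims to aggregate these pointwise gains into the strict inequality $H(A\cup B) > H(A)$, a contradiction. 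The hard part — and it is the crux of the entire problem — is that this basic computation does not close at $\tfrac12$ but stalls at the golden-ratio threshold $\tfrac{3-\sqrt{5}}{2}\approx 0.382$: above that value the one-variable real inequality on which the aggregation rests simply fails. I therefore expect essentially all of the genuine difficulty to lie in strengthening this entropy inequality, or in finding a fundamentally different global invariant sensitive to union-closure, rather than in any part of the setup; reaching the full constant $\tfrac12$ is precisely the point at which every known approach, including the weight bounds developed in this paper, currently breaks down.
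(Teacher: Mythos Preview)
Your proposal is not a proof, and you correctly recognise this by the end: you survey averaging via Reimer's bound, induction on an element, and the entropy method, and for each you explain why it stalls short of the constant $\tfrac12$. That diagnosis is accurate, but it means there is no proof here --- only a catalogue of obstructions.

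The paper does not prove this statement either. Conjecture~\ref{ucconj} is Frankl's union-closed sets conjecture, stated in the paper as an open problem and used only as motivation. The paper's contributions (Theorem~\ref{sepweight}, Theorem~\ref{maintheor}, Corollary~\ref{avcol}) are lower bounds on weight and average degree for separating union-closed families, not a resolution of the conjecture; indeed the introduction explicitly says ``Very little progress has been made on Conjecture~\ref{ucconj}.'' So there is no ``paper's own proof'' to compare against. If the exercise was to prove the labelled statement, the honest answer is that it is open (and, as you note, the entropy approach is known to reach roughly $0.382$ but not $\tfrac12$).
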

(An equivalent lattice-theoretic version also exists. See for example Abe and Nakano, Poonen or Stanley \cite{abenakano, poonen, stanley}.)

Very little progress has been made on Conjecture \ref{ucconj}. A simple argument due to Knill~\cite{knill} establishes that for any union-closed family $\mathcal{S}$ with $|\mathcal{S}|=m$, there always exists some $x$ contained in at least $\frac{m}{\log_2 m}$ members of $\mathcal{S}$. W\'ojcik \cite{wojcik1} improved this by a multiplicative constant. The conjecture is also known to hold if $|\mathcal{S}|<40$ (see \cite{lofaro, roberts}) or $|V(\mathcal{S})|<11$ (see \cite{morris, bosnmarkov}), if $|\mathcal{S}|>\frac{5}{8} \times 2^{|V(\mathcal{S})|}$ (see \cite{czedli, czedlietal1, czedlietal2}), or if $\mathcal{S}$ contains some very specific collections of small sets (see \cite{morris, bosnmarkov}).

In a different direction, Reimer~\cite{reimer} found a beautiful shifting argument to obtain a sharp lower bound on the average set size of $\mathcal{S}$ as a function of $|\mathcal{S}|$. We state his result here.

\begin{quotedresult}[Reimer's Average Set Size Theorem]\label{reimerweight} 
Let $\mathcal{S}$ be a union-closed family. Then 
\begin{displaymath}
\frac{1}{|\mathcal{S}|}\sum_{A \in \mathcal{S}} |A| \geq \frac{\log_2 |\mathcal{S}|}{2}
\end{displaymath}
with equality if and only if $\mathcal{S}$ is a powerset.
\end{quotedresult}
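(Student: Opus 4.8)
The plan is to prove this by a shifting/compression argument, in the spirit of Reimer \cite{reimer}. Write $m = |\mathcal{S}|$, and note that we may assume every element of the ground set lies in some member of $\mathcal{S}$. The skeleton is to repeatedly apply an operation to $\mathcal{S}$ which (i) keeps $|\mathcal{S}|$ fixed, (ii) never increases $w(\mathcal{S})$, (iii) preserves union-closedness, and (iv) terminates, and then to verify the bound for the final, suitably canonical, family.

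The natural operation is the down-shift $D_x$ at an element $x$: replace every $A \in \mathcal{S}$ with $x \in A$ and $A \setminus \{x\} \notin \mathcal{S}$ by $A \setminus \{x\}$, and leave the other members of $\mathcal{S}$ unchanged. No two sets are ever identified, so $D_x$ preserves $|\mathcal{S}|$; and every altered set loses an element, so $D_x$ strictly decreases the non-negative integer $w(\mathcal{S})$ whenever it does anything — hence there is no infinite chain of non-trivial shifts. If one could show that a union-closed family fixed by every $D_x$ has a transparent structure — an initial segment in a suitable order, close to a Boolean lattice — one would then finish by a term-by-term comparison of its level sizes with the binomial weights of a genuine powerset, the powerset being exactly the case of equality.

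I expect the crux to be that $D_x$ does not in general preserve union-closedness: the union of a shifted set with an unshifted one can escape the new family — for instance $D_x$ applied to $\{\emptyset, \{x\}, \{x,y,z\}\}$ at $x$ yields $\{\emptyset, \{x\}, \{y,z\}\}$, which is not union-closed. The remedy is to shift only conditionally, performing exactly those deletions that keep the family union-closed; the work is then to prove this still makes genuine progress and to describe the union-closed families admitting no legal shift precisely enough to extract the sharp constant $\tfrac12$ and the equality characterisation. One should not hope to bypass this via a naive induction on $m$: splitting $\mathcal{S}$ on an element $x$ into the union-closed families $\mathcal{A} = \{A \in \mathcal{S} : x \notin A\}$ and $\mathcal{B} = \{A \setminus \{x\} : x \in A \in \mathcal{S}\}$ gives $w(\mathcal{S}) = w(\mathcal{A}) + w(\mathcal{B}) + |\mathcal{B}|$, but feeding in the inductive hypothesis closes the induction only when $|\mathcal{B}| = d_{\mathcal{S}}(x) \geq \tfrac{m}{2}$ — precisely Frankl's conjecture for $x$ — so any correct argument must steer clear of a maximum-degree element.
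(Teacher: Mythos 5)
The paper does not actually prove this statement: it is quoted from Reimer \cite{reimer} and used as a black box throughout, so there is no internal proof to compare against. Judged on its own terms, your proposal is a research plan rather than a proof, and it stops exactly where the theorem begins. The diagnostic parts are sound --- the unconditional down-shift $D_x$ does destroy union-closure (your three-set example is correct), and the induction via $\mathcal{A}=\{A\in\mathcal{S}: x\notin A\}$ and $\mathcal{B}=\{A\setminus\{x\}: x\in A\in\mathcal{S}\}$ does close only when $d_{\mathcal{S}}(x)\geq \tfrac{m}{2}$, i.e.\ only assuming Frankl's conjecture --- but everything that is supposed to replace these dead ends is deferred: you never define the ``conditional shift'' precisely, never prove it terminates in a family of ``transparent structure,'' never describe that structure, and never carry out the ``term-by-term comparison'' that is meant to yield the constant $\tfrac12$ and the equality characterisation. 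Since the constant $\tfrac12$ is the entire content of the theorem, a sketch that leaves its extraction as future work has not yet engaged with the difficulty.

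Moreover, the missing step is the main idea, not a technicality, and it is not quite of the shape you anticipate. Reimer's argument does push $\mathcal{S}$ down to a down-set $\mathcal{D}$ of the same size by a shifting process, but the bound is not read off from the terminal family alone: the essential objects are the tracking bijection $\phi:\mathcal{S}\to\mathcal{D}$ with $\phi(A)\subseteq A$ that records where each set ends up, and the lemma that the Boolean intervals $[\phi(A),A]$ are pairwise disjoint subcubes of $\mathcal{P}(\Omega)$; the inequality $\sum_{A\in\mathcal{S}}|A|\geq \tfrac12 m\log_2 m$ and the identification of the powerset as the unique equality case then follow from a counting argument over this disjoint cube decomposition. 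Your sketch contains no analogue of $\phi$, of the disjointness lemma, or of the final count, so as it stands the proposal identifies the right genre of argument but supplies none of its substance.
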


Define the \emph{weight} of a family $\mathcal{S}$ to be
\begin{align*}
w(\mathcal{S}) &:=  \sum_{A \in \mathcal{S}} |A| \\
&= \sum_{x \in \Omega} d_{\mathcal{S}}(x).
\end{align*}

We shall think of Reimer's Theorem as a lower bound for the smallest possible weight of a union-closed family of a given size. Let $\mathcal{S}$ be a union-closed family. In this form, Reimer's Theorem states that
\begin{align*}
w(\mathcal{S}) &\geq \frac{|\mathcal{S}|\log_2 |\mathcal{S}|}{2}
\end{align*}
with equality if and only if $\mathcal{S}$ is a powerset. The purpose of this paper is to show how we may improve this inequality if we have some additional information about $\Omega(\mathcal{S})$. As a corollary, we also give asymptotically tight (up to a constant) lower bounds on the average degree over $\Omega$, $\frac{1}{|\Omega|}\sum_{x\in \Omega} d_{\mathcal{S}}(x)$.

As we remarked earlier, $\Omega(\mathcal{S})$ is not uniquely specified by $\mathcal{S}$. For example, $\Omega(\mathcal{S})$ could contain many elements which do not appear in $\mathcal{S}$. This would bring the average degree in $\Omega$ arbitrarily close to $0$. Restricting our attention to $V(\mathcal{S})$ does not entirely resolve this problem: pick $x\in V(\mathcal{S})$. Replacing every instance of $x$ in a member of $\mathcal{S}$ by a set $x_1,x_2, \ldots x_M$ for some arbitrarily large $M$ gives us a new union-closed family $\mathcal{S}'$ with the same structure as $\mathcal{S}$ but with average degree over $V(\mathcal{S}')$ arbitrarily close to $d_{\mathcal{S}}(x)$.

Thus to say anything interesting about average degree, we need to impose a restriction on $\mathcal{S}$ and its domain. In particular we want to make sure that no element of $\Omega(\mathcal{S})$ is `cloned' many times over. We make therefore the following natural definition.

\begin{definition}
A family $\mathcal{S}$ \emph{separates} a pair $(i,j)$ of elements of $\Omega(\mathcal{S})$ if there exists $A \in \mathcal{S}$ such that $A$ contains exactly one of $i$ and $j$. $\mathcal{S}$ is \emph{separating} if it separates every pair of distinct elements of $\Omega(\mathcal{S})$. If $|\Omega(S)|=n$ and $\mathcal{S}$ is separating, we say that $\mathcal{S}$ is $n$-separating. 
\end{definition} 

Recalling our identification of sets with their characteristic functions, $\mathcal{S}$ is separating if and only if it separates the points of $\Omega(\mathcal{S})$ as a family of functions $\Omega \rightarrow\{0,1\}$.

Trivially, a family $\mathcal{S}$ of size $|\mathcal{S}|=m$ can be at most $2^m$-separating. In Section 2, we make use of certain heredity properties of union-closed families to prove that if in addition $\mathcal{S}$ is union-closed it can be at most $(m+1)$-separating. The main result of that section, Theorem~\ref{sepweight}, establishes that for any $n$ there is a unique (up to relabelling of vertices) $n$-separating union-closed family of minimal weight.

In the third section, we use Theorem~\ref{sepweight} together with Reimer's Theorem to obtain lower bounds on the weight of $n$-separating union-closed families of size $m$ for every realisable pair $(m,n)$. 

We construct families of examples showing these bounds are sharp up to a multiplicative factor of $2+O\left(\frac{1}{\log_2 m}\right)$.

In the final section we consider a generalisation of our original problem. We define the \emph{$l$-fold weight} of a family $\mathcal{S}$ to be
\[w_l(\mathcal{S}):= \sum_{A \in \mathcal{S}} \binom{|A|}{l}.\] 
The $0$-fold weight of $\mathcal{S}$ is just the size of $\mathcal{S}$, while the $1$-fold weight is the weight $w(\mathcal{S})$ we introduced earlier. Similarly to the $l=1$ case, we can bound $w_l$ below for $l\geq 2$ when $\mathcal{S}$ is separating using a combination of Reimer's Theorem and Theorem~\ref{sepweight} together with some elementary arguments. Again we provide constructions showing our bounds are the best possible up to a multiplicative factor of $2+O\left(1/\log_2 m\right)$. As instant corollaries to our results in sections~3 and~4, we have for any $l\geq 1$ sharp (up to a multiplicative constant) lower bounds on the expected number of sets in $\mathcal{S}$ containing a randomly selected $l$-tuple from $\Omega(\mathcal{S})$. These results are related to a generalisation of the union-closed sets conjecture.

\section{Separation}

In this section we use our definition of \emph{separation} to prove some results about separating union-closed families. We begin with an item of notation. Let $\mathcal{S}$ be a family with domain $\Omega$. Given $X \subseteq\Omega$, we will denote by $\mathcal{S}[X]$ the family \emph{induced} by $X$ on $\mathcal{S}$,
\[\mathcal{S}[X]:=\left\{A\setminus X |A\supseteq  X, A\in \mathcal{S} \right\}.\]
We shall consider $\mathcal{S}[X]$ as a family with domain $\Omega(\mathcal{S}) \setminus X$. In a slight abuse of notation we shall usually write $\mathcal{S}[x]$ for $\mathcal{S}[\{x\}]$. Note that $|\mathcal{S}[x]|=d_{\mathcal{S}}(x)$.

Recall that $\mathcal{S}$ \emph{separates} a pair $(i, j)$ of elements of $\Omega(\mathcal{S})$ if there exists $A \in \mathcal{S}$ such that $A$ contains exactly one of $i$ and $j$. $\mathcal{S}$ is said to be \emph{separating} if it separates every pair of distinct elements of $\Omega(\mathcal{S})$. We introduce an equivalence relation $\cong_{\mathcal{S}}$ on its domain $\Omega(\mathcal{S})$ by setting $x \cong_{\mathcal{S}} y$ if $\mathcal{S}$ does not separate $x$ from $y$. Quotienting $\Omega$ by $\cong_{\mathcal{S}}$ in the obvious way, we obtain a reduced family 
\[\mathcal{S}'= \mathcal{S}/\cong_{\mathcal{S}}\]
on a new domain $\Omega'$ consisting of the $\cong_{\mathcal{S}}$ equivalence classes on $\Omega$. It follows from the definition of $\cong_{\mathcal{S}}$ that $\mathcal{S}'$ is separating and uniquely determined by the knowledge of $\mathcal{S}$ and $\Omega$. We shall refer to $\mathcal{S}'$ as the \emph{reduction} of $\mathcal{S}$.

Union-closure is clearly preserved by our quotienting operation. Every union-closed family $S$ may thus be reduced to a unique separating union-closed family in this way. Such separating union-closed families will be the main object we study in this paper. Before proving anything about them, let us give a few examples.

For $n\geq 2$, we define the \emph{staircase} of height $n$ to be the union-closed family 
\[T_n =\left\{ \{n\}, \{n-1, n\}, \{n-2, n-1, n\}, \ldots \{2,3,  \ldots n\}\right\}\]
with domain $\Omega(T_n)=\{1,2,3 \ldots...n \}$. Note that $T_n$ is $n$-separating, has size $n-1$ and that $V(T_n) \neq \Omega(T_n)$, since the element $1$ is not contained in any set of $T_n$. For completeness, we define $T_1$ to be the empty family with domain $\Omega(T_1)=\{1\}$ and size $0$. Recall that $T_n[X]$ is the subfamily of $T_n$ induced by $X$. $T_n$ has the property that $T_n[\{n\}]=T_{n-1} \cup \{\emptyset\}$.

We shall prove that $T_n$ is an $n$-separating union-closed family of least weight.

For $n \geq 2$, the \emph{plateau} of width $n$ is the $n$-separating union-closed family
\[U_n =\left\{\{1,2,\ldots n-1\}, \{1,2, \ldots n-2, n\}, \ldots \{1,3,4 \ldots n\}, \{2,3, \ldots n\}, [n]\right\}.\]
with domain $\Omega(U_n)=[n]$ and size $n+1$. For completeness we let $E_1$ be the family $\{\emptyset, \{1\}\}$ with domain $\{1\}$.
It is easy to see that $U_n$ is the $n$-separating union-closed family of size $n+1$ with maximal weight. It has weight roughly twice that of $T_n$, and the additional property that for every pair $\{i,j\} \subseteq [n]$ there is a set in $U_n$ containing $i$ and not $j$ as well as a set containing $j$ and not $i$.

Finally, or $n\geq 1$, the powerset of $[n]$, $P_n=\mathcal{P}[n]$ is, of course, a $n$-separating union-closed family with domain $\Omega(P_n)=V(P_n)=[n]$. Note that $P_n[\{n\}]=P_{n-1}$, and that $P_n$ is the largest $n$-separating family in every sense of the word, having both the maximum size and the maximum weight possible.

Let us now turn to the main purpose of this section. 

We begin with a trivial lemma.

\begin{lemma} \label{remarkable}
Let $\mathcal{S}$ be a separating family on $\Omega=[n]$ with elements labelled in order of increasing degree. Then if $1\leq i < j\leq n$ there exists $A \in \mathcal{S}$ with $j\in A$, $i\notin A$.  
\end{lemma}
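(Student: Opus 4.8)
The plan is to partition $\mathcal{S}$ according to which of the two points $i$ and $j$ each member contains. Set $\mathcal{A} := \{A \in \mathcal{S} : i \in A,\ j \notin A\}$ and $\mathcal{B} := \{A \in \mathcal{S} : j \in A,\ i \notin A\}$, and let $\mathcal{C} := \{A \in \mathcal{S} : i,j \in A\}$. The conclusion we want is precisely that $\mathcal{B} \neq \emptyset$, so it suffices to show $|\mathcal{B}| \geq 1$.

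First I would note the two elementary counting identities $d_{\mathcal{S}}(i) = |\mathcal{A}| + |\mathcal{C}|$ and $d_{\mathcal{S}}(j) = |\mathcal{B}| + |\mathcal{C}|$. Since the elements of $\Omega = [n]$ are labelled in order of increasing degree and $i < j$, we have $d_{\mathcal{S}}(i) \leq d_{\mathcal{S}}(j)$, and subtracting $|\mathcal{C}|$ from both sides gives $|\mathcal{A}| \leq |\mathcal{B}|$. Next, because $\mathcal{S}$ is separating it in particular separates the pair $(i,j)$, so some member of $\mathcal{S}$ contains exactly one of $i$ and $j$; that member lies in $\mathcal{A} \cup \mathcal{B}$, whence $|\mathcal{A}| + |\mathcal{B}| \geq 1$. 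Combining the two inequalities yields $2|\mathcal{B}| \geq |\mathcal{A}| + |\mathcal{B}| \geq 1$, and since $|\mathcal{B}|$ is a non-negative integer this forces $|\mathcal{B}| \geq 1$. Any $A \in \mathcal{B}$ then satisfies $j \in A$ and $i \notin A$, as required.

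There is no real obstacle in this argument; it is the ``trivial lemma'' advertised above. The only point demanding a little care is the direction in which the degree ordering is applied: the labelling convention means a \emph{smaller} index has \emph{no larger} degree, and it is exactly this that makes the ``contains $i$ but not $j$'' class no bigger than the ``contains $j$ but not $i$'' class, so that separation cannot be witnessed \emph{only} by sets of the former type.
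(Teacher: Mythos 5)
Your argument is correct and is essentially the paper's own proof: the paper likewise uses separation to get a set containing exactly one of $i,j$ and then invokes $d_{\mathcal{S}}(i)\leq d_{\mathcal{S}}(j)$ to conclude that at least one such set contains $j$ and not $i$. You have merely made the underlying counting ($|\mathcal{A}|\leq|\mathcal{B}|$ and $|\mathcal{A}|+|\mathcal{B}|\geq 1$) explicit, which is a fine, slightly more detailed rendering of the same idea.
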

\begin{proof}
Since $\mathcal{S}$ is separating, there is some $A$ in $\mathcal{S}$ containing one but not both of $i$, $j$. But we also know that $d_{\mathcal{S}}(i) \leq d_{\mathcal{S}}(j)$, so at least one such $A$ contains $j$ and not $i$. 
\end{proof}

Repeated applications of Lemma~\ref{remarkable} yield the following:

\begin{lemma}\label{sepstruc}
Let $\mathcal{S}$ be a separating union-closed family with $\Omega(\mathcal{S})=[n]$ and elements of $\Omega$ labelled in order of increasing degree. Then for every $i \in [n-1]$, $\mathcal{S}$ contains a set $A_i = \left([n]\setminus[i]\right) \cup X_i$, where $X_i \subseteq [i-1]$. These $n-1$ sets are distinct.
\end{lemma}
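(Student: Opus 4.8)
The plan is to build each $A_i$ as a union of sets supplied by Lemma~\ref{remarkable}, and then invoke union-closure. Fix $i \in [n-1]$. For every $j$ with $i < j \leq n$, Lemma~\ref{remarkable} (applied to the pair $(i,j)$) provides a member $B_j \in \mathcal{S}$ with $j \in B_j$ and $i \notin B_j$. I would then set
\[ A_i := \bigcup_{j = i+1}^{n} B_j, \]
which lies in $\mathcal{S}$ because $\mathcal{S}$ is union-closed and the union is of a nonempty finite collection of members of $\mathcal{S}$ (so it is obtained by iterating pairwise unions).

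Next I would check that $A_i$ has the claimed shape. Every $j \in \{i+1, \dots, n\} = [n] \setminus [i]$ satisfies $j \in B_j \subseteq A_i$, so $[n] \setminus [i] \subseteq A_i$; on the other hand $i \notin B_j$ for each $j$ in the range, so $i \notin A_i$. Since $A_i \subseteq \Omega = [n]$, setting $X_i := A_i \cap [i-1]$ gives $A_i = \left([n] \setminus [i]\right) \cup X_i$ with $X_i \subseteq [i-1]$, exactly as required (and $X_i$ is allowed to be empty).

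Finally, for distinctness: if $1 \leq i < k \leq n-1$, then $k \in [n] \setminus [i]$, hence $k \in A_i$, whereas $k \notin A_k$ by the construction of $A_k$. Therefore $A_i \neq A_k$, and the $n-1$ sets $A_1, \dots, A_{n-1}$ are pairwise distinct.

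I do not expect a real obstacle here: the argument is just a repeated application of Lemma~\ref{remarkable} closed off under unions. The only points that need a moment's care are verifying $i \notin A_i$ (this is precisely where it matters that each $B_j$ was chosen to avoid $i$, which in turn relied on the ordering of $\Omega$ by increasing degree in Lemma~\ref{remarkable}) and the degenerate case $i = n-1$, where the union defining $A_{n-1}$ consists of the single set $B_n$.
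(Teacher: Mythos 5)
Your proposal is correct and follows exactly the paper's own argument: for each $j>i$ take the set $B_j$ from Lemma~\ref{remarkable} containing $j$ but not $i$, form $A_i=\bigcup_{j>i}B_j$ which lies in $\mathcal{S}$ by union-closure, and observe that $j\in A_i$ but $j\notin A_j$ for $i<j$ gives distinctness. The extra care you take over the shape of $X_i$ and the degenerate case $i=n-1$ is fine but not needed beyond what the paper states.
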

\begin{proof}
Pick $i \in [n-1]$. By Lemma~\ref{remarkable}, for each $j>i$ there exists $B_j \in \mathcal{S}$ containing $j$ and not $i$. Let $A_i= \bigcup_{j>i} B_j$. By union-closure, $A_i \in \mathcal{S}$. $A_i$ is clearly of the form $\{i+1, i+2, \ldots n\}\cup X_i$, where $X_i$ is a subset of $[i-1]$. Moreover if $i<j$ we have $A_i \neq A_j$ since $j \in A_i$, $j\notin A_j$. 
\end{proof}

The main result of this section follows easily.

\begin{theorem}\label{sepweight}

Let $\mathcal{S}$ be a separating union-closed family on $\Omega(\mathcal{S})=[n]$ with elements labelled in order of increasing degree.  Then $d_{\mathcal{S}}(i) \geq i-1$ for all $i \in [n]$. In particular, $|\mathcal{S}|\geq n-1$, and the weight of $\mathcal{S}$ satisfies :
\[w(\mathcal{S}) \geq \binom{n}{2}.\] 
Moreover, $w(\mathcal{S}) =\binom{n}{2}$ if and only if $\mathcal{S}$ is one of $T_n$ or $T_n \cup\{\emptyset\}$, where $T_n$ is the staircase of height $n$ introduced earlier.
\end{theorem}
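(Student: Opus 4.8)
The plan is to build directly on Lemma~\ref{sepstruc}, which already hands us the inequality $d_{\mathcal{S}}(i)\geq i-1$: for each $i\in[n-1]$ the set $A_i=([n]\setminus[i])\cup X_i$ contains $i+1$, so counting over the distinct sets $A_{1},\dots,A_{i-1}$ (each of which contains $i$, since for $j<i$ we have $i\in[n]\setminus[j]$ when $j<i\le n$) shows $d_{\mathcal{S}}(i)\ge i-1$. Summing, $w(\mathcal{S})=\sum_{i=1}^n d_{\mathcal{S}}(i)\ge \sum_{i=1}^n (i-1)=\binom{n}{2}$, and $|\mathcal{S}|\ge n-1$ since the $A_i$ are $n-1$ distinct members. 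So the only real work is the equality characterisation.

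For the characterisation, I would argue that equality forces $d_{\mathcal{S}}(i)=i-1$ for every $i$, and then reconstruct $\mathcal{S}$ from the bottom up. Since $d_{\mathcal{S}}(1)=0$, the element $1$ lies in no set, so every $A_i$ has $X_i\subseteq[i-1]\setminus\{1\}$. Since $d_{\mathcal{S}}(2)=1$, exactly one member of $\mathcal{S}$ contains $2$; but by Lemma~\ref{remarkable} the sets $A_2,\dots,A_{n-1}$ all avoid $2$ while $A_1\ni 2$, so $A_1$ is the unique set through $2$ — and iterating this idea, I claim $d_{\mathcal{S}}(i)=i-1$ means the only sets containing $i$ are exactly $A_1,\dots,A_{i-1}$. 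The key step is to show each $A_i$ must be the ``clean'' staircase set $\{i+1,\dots,n\}$, i.e. $X_i=\emptyset$: if some $A_i$ contained an element $k\le i-1$, that would be an extra set through $k$ beyond $A_1,\dots,A_{k-1}$, violating $d_{\mathcal{S}}(k)=k-1$ (here one must be slightly careful to check $A_i$ is genuinely distinct from all $A_1,\dots,A_{k-1}$, which holds because those contain $k+1,\dots$ whereas $A_i$ does not contain, say, $i$). Hence $\{A_1,\dots,A_{n-1}\}=T_n$, and since these already account for all incidences, $\mathcal{S}\subseteq T_n\cup\{\emptyset\}$; combined with $\mathcal{S}\supseteq\{A_1,\dots,A_{n-1}\}=T_n$ we get $\mathcal{S}\in\{T_n,\,T_n\cup\{\emptyset\}\}$. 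Conversely one checks directly that both $T_n$ and $T_n\cup\{\emptyset\}$ are separating, union-closed, and have weight $\sum_{k=1}^{n-1}k=\binom{n}{2}$.

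The main obstacle I anticipate is the bookkeeping in the equality analysis — specifically, establishing cleanly that ``$d_{\mathcal{S}}(i)=i-1$ for all $i$'' implies the sets through $i$ are \emph{precisely} $A_1,\dots,A_{i-1}$ with no room for anything else, and that no $A_i$ can carry a nonempty tail $X_i$. The delicate point is ensuring the ``extra set'' one derives is counted without double-counting against the $A_j$'s already known to pass through the relevant element; this is handled by the observation that $A_j$ for $j<i$ contains every element of $\{j+1,\dots,n\}$, in particular contains $i$, whereas an offending $A_i$ would contain some $k\le i-1$ but not $i$, so it is new. Once that is pinned down, the reconstruction of $T_n$ is immediate and the forward direction (both families achieve the bound) is a one-line computation.
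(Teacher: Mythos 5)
Your proposal is correct and follows essentially the same route as the paper: both rest entirely on Lemma~\ref{sepstruc}, the only difference being that you sum the degree bounds $d_{\mathcal{S}}(i)\geq i-1$ over elements while the paper sums the size bounds $|A_i|\geq n-i$ over the sets $A_1,\dots,A_{n-1}$ (a dual count of the same incidences). The paper's version makes the equality case slightly more immediate --- equality directly forces $X_i=\emptyset$ and excludes any further nonempty set --- whereas your degree-based bookkeeping needs the extra distinctness check you identify, but that check is valid and the argument goes through.
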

\begin{proof}
By Lemma~\ref{sepstruc}, $\mathcal{S}$ contains $n-1$ distinct sets $A_1$, $A_2$, $\ldots A_{n-1}$ such that $[n]\setminus[i] \subseteq A_i$. It follows in particular that $|\mathcal{S}| \geq n-1$ and that $d_{\mathcal{S}}(i) \geq i-1$ for all $i \in [n]$. Moreover
\begin{align*}
w(\mathcal{S}) & \geq \sum_{i \in [n-1]}|A_i|\\
&\geq \sum_{i \in [n-1]} (n-i)=\binom{n}{2}\\ 
\end{align*}
with equality if and only if $A_i=[n]\setminus [i]$ for every $i$ and in addition $\mathcal{S}$ contains no nonempty set other than the $A_i$. Thus $w(\mathcal{S})=\binom{n}{2}$ if and only if $\mathcal{S}$ is one of $T_n$ or $T_n \cup\{\emptyset\}$, as claimed.  
\end{proof}

\section{Minimal weight}
In this section we use Reimer's Theorem and Theorem \ref{sepweight} together to obtain a lower bound on the weight of an $n$-separating union-closed family of size $m$. We then give constructions in the entire range of possible $n$, $\log_2 m \leq n \leq m+1$, showing our bounds are asymptotically sharp except in the region $n =\Theta \left( \sqrt{m\log_2 m} \right)$ (where they are differ by a multiplicative factor of at most $2$). As a corollary, we obtain a lower bound on the average degree in a separating union-closed family.

Let $\mathcal{S}$ be an $n$-separating union-closed family with $|\mathcal{S}|=m$. Recall that the \emph{weight} of $\mathcal{S}$, $w(\mathcal{S})$ is 
\[w(\mathcal{S})=\sum_{A \in \mathcal{S}}|A|=\sum_{x\in \Omega(\mathcal{S})} d_{\mathcal{S}}(x).\]
We know from Reimer's Theorem that 
\[w(\mathcal{S}) \geq \frac{m \log_2 m}{2}.\] 
We have another bound for $w(\mathcal{S})$ coming from our separation result, Theorem~\ref{sepweight}:
\[w(\mathcal{S}) \geq \frac{n(n-1)}{2}.\]
If $n\leq \frac{1}{2}\left(1+ \sqrt{1+4m \log_2 m} \right)=\sqrt{m \log_2 m}+O(1)$, the `bound in $m$' from Reimer's Theorem is stronger; if on the other hand $n \geq \frac{1}{2}\left(1+ \sqrt{1+4m \log_2 m} \right)$, the `bound in $n$' from Theorem \ref{sepweight} is sharper.

For the bound in $m$, equality occurs if and only if $\mathcal{S}$ is a powerset, that is if and only $n= \log_2 m$. For the bound in $n$, equality occurs if and only if $\mathcal{S}$ is a staircase (with possibly the empty set added in). This can only occur if $n=m$ or $n=m+1$. Remarkably the combined bound is asymptotically sharp everywhere except in the region $n =\Theta\left(\sqrt{m \log_2 m} \right)$, where it is only asymptotically sharp up to a constant. We shall show this by constructing intermediate families between powersets and staircases. Roughly speaking these intermediary families will look like staircases sitting on top of a powerset-like bases. This will allow Reimer's Theorem and Theorem~\ref{sepweight} to give us reasonably tight bounds. Some technicalities arise to make this work for all all possible $(m,n)$.

We call a pair of integers $(n,m)$ \emph{satisfiable} if there exists an $n$-separating union-closed family of size $m$ -- in particular $n$ and $m$ must satisfy $n-1 \leq m \leq 2^n$. Of course for $m=2^n$ the powerset $P_n$ is the only $n$-separating family of the right size. By Theorem~\ref{sepweight} we know already how to construct $n$-separating union-closed families of sizes $m=n-1$ or $m=n$ with minimal weight. Also if $m=n+1$, it is easy to see that the family $T_n\cup \{\emptyset\} \cup\{\{n-1\}\}$ has minimal weight, so for our purposes we may as well assume $2^n>m>n+1$ in what follows.

Given a satisfiable pair $(m,n)$ with $2^n>m>n+1$, there exists a unique integer $b$ such that $2^b-b\leq m-n < 2^{b+1}-(b+1)$. Our aim is to take for our powerset-like base a suitable family of $m-(n-b-1)$ subsets of $[b+1]$, and to place on top of it a staircase of height $n-(b+1)$, thus obtaining a separating union-closed family with the right size and domain.

For such a $b$ we have $2^b+1\leq m-n+b+1 \leq 2^{b+1}$. Write out the binary expansion of $m-n+b+1$ as $2^{b_1}+2^{b_2}+ \ldots 2^{b_t}$ with $0 \leq b_t< b_{t-1} < \ldots <b_1$, and note $b\leq b_1 \leq b+1$. We shall build the base $\mathcal{B}$ of our intermediate family by adding up certain subcubes of $\mathcal{P}[b+1]$.

First of all if $b_1=b+1$, we shall just let $\mathcal{B}$ be the whole of $\mathcal{P}[b+1]$. This is the ``nontechnical case'' of our construction. If on the other hand $b_1=b$, we let $Q_1$ denote the $b_1$-dimensional subcube $\{X \cup\{b+1\}\mid X \subseteq [b] \}$, and for every $i:\ 2\leq i \leq t$ we let 
$Q_i$ be the $b_i$-dimensional subcube $\{X\cup\{b_{i-1} \}\mid X \subseteq [b_i] \}$.  We then set $\mathcal{B}=\bigcup_i Q_i$.

It is easy to see that the $Q_i$ are disjoint. Indeed write $b_0$ for $b+1$ and suppose $i< j$; for every $X \in Q_i$, $b_{i-1}$ is the largest element in $X$ whereas for every $X' \in Q_j$, $b_{j-1}< b_{i-1}$ is the largest element contained in $X'$, so that $X \neq X'$.

\begin{claim}
$\mathcal{B}$ is a $(b+1)$-separating union-closed family. 
\end{claim}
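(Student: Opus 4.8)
The plan is to verify the three defining properties — domain/separation, union-closure, and (implicitly) that the count comes out to the required size — by exploiting the explicit subcube decomposition $\mathcal{B} = \bigcup_i Q_i$ already set up, together with the disjointness of the $Q_i$ that was just established.

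First I would dispose of the nontechnical case $b_1 = b+1$: there $\mathcal{B} = \mathcal{P}[b+1]$, which is visibly union-closed, has domain $[b+1]$, and is separating since the full powerset separates all pairs. So assume $b_1 = b$ for the rest. For the domain and separation claim, I would note that $Q_1$ contains the sets $\{j\} \cup \{b+1\}$ for each $j \in [b]$ (take $X = \{j\}$, which is legitimate since $j \le b = b_1$), and also $\{b+1\}$ itself (take $X = \emptyset$), and $\{b\} \cup \{b+1\} \in Q_1$ already covers $b$; more simply, $Q_1 \cong \mathcal{P}[b]$ translated by adjoining the fixed element $b+1$, so restricted to the coordinates $[b]$ it already separates every pair inside $[b]$, and it separates $b+1$ from everything because $\emptyset \cup \{b+1\} = \{b+1\}$ lies in $Q_1$ while, say, $\{1,b+1\}$ does too — actually the cleanest statement is: $Q_1$ alone is already $(b+1)$-separating as a family on $[b+1]$, because for any pair $\{i,j\}$ with $i,j \le b$ the set $\{i\}\cup\{b+1\} \in Q_1$ separates them, and for the pair $\{i, b+1\}$ the sets $\{b+1\} \in Q_1$ and $\{i,b+1\}\in Q_1$ separate them. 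Since $\mathcal{B} \supseteq Q_1$ and adding more sets cannot merge two $\cong$-classes, $\mathcal{B}$ is $(b+1)$-separating on $[b+1]$.

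The substantive step is union-closure, and this is the main obstacle. Each individual $Q_i$ is union-closed (it is a translate of a powerset: fixing the top element $b_{i-1}$ and ranging over all subsets of $[b_i]$ is closed under union). So I must show that the union of two sets coming from different subcubes $Q_i$ and $Q_j$ with $i < j$ lands back in $\mathcal{B}$. Take $A \in Q_i$ and $A' \in Q_j$. Recall that $b_{i-1}$ is the largest element of $A$, that $A \setminus \{b_{i-1}\} \subseteq [b_i]$, and similarly for $A'$ with $b_{j-1} < b_{i-1}$. The key observation is that $b_{j-1} < b_{i-1}$, and in fact $b_{j-1} \le b_{i} \le b_i$ — wait, we need $b_{j-1} \le b_i$, which holds because $j - 1 \ge i$ so $b_{j-1} \le b_i$ by the strict decrease of the $b_\ell$. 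Hence $A' \subseteq [b_i] \cup \{b_{j-1}\} \subseteq [b_i]$ (using $b_{j-1} \le b_i$), so $A \cup A'$ still has largest element $b_{i-1}$ and $(A \cup A') \setminus \{b_{i-1}\} \subseteq [b_i]$, i.e. $A \cup A' \in Q_i \subseteq \mathcal{B}$. For three or more sets one unions pairwise and the result follows by the $i<j$ analysis applied to the smallest index present. I would double-check the one edge inequality $b_{j-1}\le b_i$ carefully, since it is exactly what makes the argument go through; everything else is bookkeeping with the subcube descriptions.

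Finally, although the claim as stated only asserts the three structural properties, for the construction to do its job one should record that $|\mathcal{B}| = \sum_i |Q_i| = \sum_i 2^{b_i} = m - n + b + 1$ by disjointness and the binary expansion, so that placing a height-$(n-b-1)$ staircase on top yields exactly $m$ sets on exactly $n$ points — but this counting is immediate from disjointness and need not be belabored here.
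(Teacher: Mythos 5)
Your proof is correct and follows essentially the same route as the paper: you observe that $Q_1$ alone already separates $[b+1]$ (via the singleton $\{b+1\}$ and the pairs $\{i,b+1\}$), that each subcube $Q_i$ is individually union-closed, and that for $i<j$ any member of $Q_j$ is contained in $[b_i]$ because $b_{j-1}\leq b_i$, so cross-unions fall back into $Q_i$. The only differences are cosmetic (your added remark on $|\mathcal{B}|$ and the slightly meandering separation discussion), and the key inequality $b_{j-1}\leq b_i$ that you flag is exactly the one the paper's proof rests on.
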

\begin{proof}
$Q_1$ is $(b+1)$-separating since it contains the singleton $\{b+1\}$ and the pairs $\{i, b+1\}$ for every $i< b+1$. Thus $\mathcal{B}$ is $(b+1)$-separating also.

Clearly each of the $Q_i$ is closed under pairwise unions. Now consider $1\leq i < j$ (or alternatively $b_0>b_i>b_j$) and take $X \in Q_i$, $Y\in Q_j$. Then 
\begin{align*}
Y&\subseteq [b_j]\cup \{b_{j-1}\} \\
&\subseteq [b_i],\\
\end{align*}
from which it follows that $X\cup Y \subseteq [b_i]\cup\{b_{i-1}\}$, and hence that $X\cup Y \in Q_i$. Thus $\mathcal{B}=\bigcup_i Q_i$ is closed under pairwise unions, as claimed.
\end{proof}

We now turn to the staircase-like top of our family, $\mathcal{T}$, which we set to be 
\[ \mathcal{T}=\{[b+2], [b+3], \ldots [n] \}.\]

Our intermediate family will then be:
\[\mathcal{S}= \mathcal{B} \cup \mathcal{T}\]
It is easy to see from our construction that $\mathcal{S}$ is union-closed, $n$-separating and has size 
\[ |\mathcal{B}|+|\mathcal{T}|=(m-n+b+1) +(n-b-1)=m.\] 
We do not claim that  $\mathcal{S}$ is an $n$-separating union-closed family of size $m$ with minimal weight;
however as we shall see $w(\mathcal{S})$ is quite close to minimal.

\begin{lemma}\label{techbound}
\[w(\mathcal{B}) < \frac{|\mathcal{B}|\log_2 |\mathcal{B}|}{2}+ |\mathcal{B}|.\]
\end{lemma}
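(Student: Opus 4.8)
The plan is to compute $w(\mathcal{B})$ almost exactly, exploiting the fact (established just above the statement) that $\mathcal{B}$ is a \emph{disjoint} union of the subcubes $Q_i$, so that $w(\mathcal{B}) = \sum_i w(Q_i)$.

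First I would dispose of the nontechnical case $b_1 = b+1$, in which $\mathcal{B} = \mathcal{P}[b+1]$. Here $|\mathcal{B}| = 2^{b+1}$ and every element of $[b+1]$ lies in exactly $2^b$ members of $\mathcal{B}$, so $w(\mathcal{B}) = (b+1)2^b = \tfrac12|\mathcal{B}|\log_2|\mathcal{B}|$, and the claimed bound holds with the whole slack term $|\mathcal{B}| > 0$ to spare.

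For the technical case $b_1 = b$ I would argue as follows. First note that $t \geq 2$: if instead $t = 1$ then $m-n+b+1 = 2^{b_1} = 2^b$, contradicting the inequality $2^b - b \leq m-n$ that defines $b$. Next, setting $b_0 := b+1$, each subcube $Q_i = \{X \cup \{b_{i-1}\} : X \subseteq [b_i]\}$ consists of $2^{b_i}$ sets, each of the form $X \cup \{b_{i-1}\}$ with $b_{i-1} > b_i$ and hence $b_{i-1}\notin X$; therefore $w(Q_i) = \sum_{X\subseteq [b_i]}(|X|+1) = b_i 2^{b_i-1} + 2^{b_i}$. Summing over $i$ and recalling $|\mathcal{B}| = \sum_i 2^{b_i}$ gives $w(\mathcal{B}) = \tfrac12\sum_i b_i 2^{b_i} + |\mathcal{B}|$. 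Since $t \geq 2$ we have $|\mathcal{B}| > 2^{b_1}$, so $\log_2|\mathcal{B}| > b_1 \geq b_i$ for every $i$, and thus $\sum_i b_i 2^{b_i} \leq b_1\sum_i 2^{b_i} = b_1|\mathcal{B}| < |\mathcal{B}|\log_2|\mathcal{B}|$. Dividing by $2$ and adding $|\mathcal{B}|$ yields the lemma.

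The only subtle point — and the one I would flag as the main obstacle — is strictness: the $+|\mathcal{B}|$ slack in the statement is precisely accounted for by the $+2^{b_i}$ contributions coming from the extra coordinate $b_{i-1}$ tacked onto each cube, so the strict inequality is carried entirely by the gap between $b_1$ and $\log_2|\mathcal{B}|$. This is what forces the little counting argument showing $t \geq 2$ in the technical case, and it is genuinely needed, since a single cube $Q_1$ would give equality rather than strict inequality. Everything else is routine binomial-coefficient bookkeeping.
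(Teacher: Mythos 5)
Your proof is correct and follows essentially the same route as the paper: both compute $w(Q_i)=2^{b_i}\left(\tfrac{b_i}{2}+1\right)$ exactly, sum over the disjoint subcubes, and extract the strict inequality from $b_1<\log_2|\mathcal{B}|$. Your explicit check that $t\geq 2$ in the technical case is a point the paper leaves implicit (its bound silently invokes the term $2^{b_2}$), so if anything your write-up is slightly more careful on the one place where strictness could fail.
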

\begin{proof}
In the ``non-technical case'' where $\mathcal{B}=\mathcal{P}[b+1]$ our assertion is trivial.
We turn therefore to the ``technical case'' where $|\mathcal{B}|=2^{b_1}+2^{b_2}+2^{b_3}+ \ldots 2^{b_t}$ with $b=b_1>b_2>\ldots >b_t\geq 0$: 
\begin{align*}
w(\mathcal{B})&= \sum_{i:\ b_i\neq 0} 2^{b_i}\left(\frac{b_i}{2}+1\right)\\
&= \frac{b}2 \sum_{i:\ b_i\neq 0} 2^{b_i} + \sum_{i:\ b_i\neq 0} 2^{b_i}\frac{b_i-b+2}2 \\
&\leq \frac{b|\mathcal{B}|}{2}+2^{b_1}+2^{b_2}/2\\
&< \frac{|\mathcal{B}|\log_2 |\mathcal{B}|}{2} +  |\mathcal{B}|.
\end{align*}

\end{proof}

Now $|\mathcal{B}|\leq m$, and the weight of $\mathcal{T}$ is clearly less than $\frac{n(n+1)}{2}$. Thus it follows that 
\[w(\mathcal{S}) < \frac{m\log_2m}{2} + \frac{n(n+1)}{2}+ m.\]

On the other hand we already know from Reimer's theorem and Theorem~\ref{sepweight} that 
\[w(\mathcal{S}) \geq \max \left(\frac{m\log_2 m}{2},\frac{n(n-1)}{2}\right),\]
which is asymptotically the same except when $n^2\sim m\log_2 m$ when the lower and upper bounds may diverge by a multiplicative factor of at most $2$.

We have thus proved the following theorem.

\begin{theorem} \label{maintheor}
Let $(n,m)$ be a satisfiable pair of integers. Suppose $\mathcal{S}$ is an $n$-separating union-closed family of size $m$ with minimal weight. Then
\[ \max\left(\frac{m\log_2 m}{2}, \frac{n(n-1)}{2}\right) \leq w(\mathcal{S}) \leq \frac{m\log_2m}{2} + \frac{n(n+1)}{2}+ m.\]
In particular if $(n_m, m)_{m\in \mathbb{N}}$ is a sequence of satisfiable pairs and $\mathcal{S}_{m}$ a sequence of $n_m$-separating union-closed families of size $m$ with minimal weight, we have the following:
\begin{itemize}
\item If $n_m/\sqrt{m\log m} \rightarrow 0$ as $m\rightarrow \infty$ then 
\[\lim_{m\rightarrow \infty} w(\mathcal{S}_m)/(\frac{m\log_2 m}{2})=1.\]
\item If $n_m/\sqrt{m\log m}\rightarrow \infty$ as $m\rightarrow \infty$ then 
\[\lim_{m\rightarrow \infty} w(\mathcal{S}_m)/(\frac{n^2}{2})=1.\]
\item Otherwise
\[1\leq \underline{\lim}\ w(\mathcal{S}_m)/\max (\frac{n^2}{2},\frac{m\log_2 m}{2}), \textrm{ and}\]
\[\overline{\lim}\ w(\mathcal{S}_m)/\max (\frac{n^2}{2}, \frac{m\log_2 m}{2})\leq 2\]
\end{itemize}
\qed 
\end{theorem}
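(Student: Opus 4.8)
The plan is to assemble the two-sided inequality from the material already developed and then read off the three asymptotic regimes. For the lower bound I would simply quote Reimer's Average Set Size Theorem, giving $w(\mathcal{S})\ge\frac{m\log_2 m}{2}$, together with Theorem~\ref{sepweight}, giving $w(\mathcal{S})\ge\binom{n}{2}=\frac{n(n-1)}{2}$; taking the larger of the two yields $w(\mathcal{S})\ge\max\left(\frac{m\log_2 m}{2},\frac{n(n-1)}{2}\right)$. For the upper bound, since $\mathcal{S}$ is assumed to have minimal weight it suffices to exhibit, for each satisfiable $(n,m)$, one $n$-separating union-closed family of size $m$ whose weight obeys the claimed bound. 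When $n+1<m<2^n$ this is the family $\mathcal{S}=\mathcal{B}\cup\mathcal{T}$ constructed above: it is $n$-separating, union-closed and of size $m$, and combining Lemma~\ref{techbound} (which gives $w(\mathcal{B})<\frac{|\mathcal{B}|\log_2|\mathcal{B}|}{2}+|\mathcal{B}|\le\frac{m\log_2 m}{2}+m$ since $1\le|\mathcal{B}|\le m$ and $x\mapsto x\log_2 x$ is increasing) with the trivial estimate $w(\mathcal{T})<\frac{n(n+1)}{2}$ gives the displayed upper bound. The boundary values $m\in\{n-1,n,n+1\}$ and $m=2^n$ are handled by the explicit families recorded earlier ($T_n$, $T_n\cup\{\emptyset\}$, $T_n\cup\{\emptyset\}\cup\{\{n-1\}\}$ and the powerset $P_n$), for each of which the upper bound is immediate and in fact highly non-tight.

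It then remains to deduce the three limit statements, and here I would first record two elementary observations. Since $(n_m,m)$ is satisfiable we have $m\le 2^{n_m}$, so $n_m\ge\log_2 m$; as $m$ ranges over $\mathbb{N}$ this forces $m\to\infty$ and hence $n_m\to\infty$. Moreover changing the base of the logarithm only rescales by the constant $\ln 2$, so the hypotheses ``$n_m/\sqrt{m\log m}\to 0$'' and ``$n_m/\sqrt{m\log m}\to\infty$'' are equivalent to ``$n_m^2=o(m\log_2 m)$'' and ``$m\log_2 m=o(n_m^2)$'' respectively.

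For the first case, $n_m^2=o(m\log_2 m)$ makes the upper bound equal to $\frac{m\log_2 m}{2}(1+o(1))$ while the lower bound is at least $\frac{m\log_2 m}{2}$, so $w(\mathcal{S}_m)/\bigl(\frac{m\log_2 m}{2}\bigr)\to 1$. The second case is symmetric: $m\log_2 m=o(n_m^2)$ also forces $m=o(n_m^2)$, so the upper bound is $\frac{n_m^2}{2}(1+o(1))$, while the lower bound is at least $\frac{n_m(n_m-1)}{2}=\frac{n_m^2}{2}(1+o(1))$, giving $w(\mathcal{S}_m)/\bigl(\frac{n_m^2}{2}\bigr)\to 1$. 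For the third case I would argue directly from the two-sided bound. Writing $D_m:=\max\left(\frac{n_m^2}{2},\frac{m\log_2 m}{2}\right)$, from $\frac{n_m(n_m-1)}{2}=\bigl(1-\tfrac1{n_m}\bigr)\frac{n_m^2}{2}$ and the lower bound one gets $w(\mathcal{S}_m)/D_m\ge 1-\tfrac1{n_m}\to 1$, hence $\underline{\lim}\ge 1$. For the reverse direction, dividing the upper bound by $D_m$ and using $\frac{m\log_2 m}{2}\le D_m$ and $\frac{n_m(n_m+1)}{2}\le D_m+\tfrac{n_m}{2}$ gives
\[
\frac{w(\mathcal{S}_m)}{D_m}\;\le\;2+\frac{n_m/2+m}{D_m},
\]
where $\frac{n_m/2}{D_m}\le\frac1{n_m}\to 0$ and $\frac{m}{D_m}\le\frac{2}{\log_2 m}\to 0$; hence $\overline{\lim}\le 2$, with the error in fact $O(1/\log_2 m)$.

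There is essentially no genuine obstacle here: all the substance --- the construction of $\mathcal{B}\cup\mathcal{T}$, Lemma~\ref{techbound}, and the two lower bounds --- is already in place, so what remains is bookkeeping. The only points requiring mild care are treating the boundary values of $m$ separately (where $\mathcal{B}\cup\mathcal{T}$ is not defined) and keeping the base of the logarithm consistent throughout the asymptotic estimates.
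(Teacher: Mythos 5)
Your proposal is correct and follows essentially the same route as the paper: the paper's ``proof'' of Theorem~\ref{maintheor} is precisely the preceding discussion in Section~3 (the two lower bounds from Reimer's Theorem and Theorem~\ref{sepweight}, the construction $\mathcal{B}\cup\mathcal{T}$ with Lemma~\ref{techbound} for the upper bound, and the separate treatment of the boundary values of $m$), with the asymptotic statements left as the same bookkeeping you carry out.
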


As a corollary to Theorems~\ref{sepweight},~\ref{maintheor} and Reimer's Theorem we have the following result regarding average degree.

\begin{corollary}\label{avcol}
Let $\mathcal{S}$ be a separating union-closed family. Then,
\[\frac{1}{|\Omega(\mathcal{S})|}\sum_{x \in \Omega(\mathcal{S})} d_{\mathcal{S}}(x) \geq \frac{\sqrt{|\mathcal{S}|\log_2 |\mathcal{S}|}}{2}+O(1).\]
Moreover there exist arbitrarily large separating union-closed families with
\[\frac{1}{|\Omega(\mathcal{S})|}\sum_{x \in \Omega(\mathcal{S})} d_{\mathcal{S}}(x) \leq \sqrt{|\mathcal{S}|\log_2|\mathcal{S}|}+O(\sqrt{|\mathcal{S}|/\log_2 |\mathcal{S}|}),\]
so our bound is asymptotically sharp except for a multiplicative factor of at most $2$.
\end{corollary}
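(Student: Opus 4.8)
The plan is to deduce both halves of Corollary~\ref{avcol} from Theorem~\ref{maintheor} (equivalently, from Reimer's Theorem together with Theorem~\ref{sepweight}) simply by dividing the weight estimates by the size of the domain and then optimising over that size. Throughout, write $m=|\mathcal{S}|$ and $n=|\Omega(\mathcal{S})|$, so that the quantity to be estimated is exactly $w(\mathcal{S})/n$.

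For the lower bound I would begin from the inequality
\[ w(\mathcal{S}) \geq \max\left(\frac{m\log_2 m}{2},\ \frac{n(n-1)}{2}\right), \]
valid for every separating union-closed family: the first term is Reimer's Theorem and the second is Theorem~\ref{sepweight} (after relabelling $\Omega(\mathcal{S})$ as $[n]$). Dividing by $n$ and splitting into the cases $n\leq\sqrt{m\log_2 m}$ and $n\geq\sqrt{m\log_2 m}$: in the first case the Reimer term gives $w(\mathcal{S})/n\geq \frac{m\log_2 m}{2n}\geq\frac12\sqrt{m\log_2 m}$, while in the second the separation term gives $w(\mathcal{S})/n\geq\frac{n-1}{2}\geq\frac12\sqrt{m\log_2 m}-\frac12$. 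Hence in all cases $w(\mathcal{S})/n\geq\frac12\sqrt{m\log_2 m}+O(1)$, and the $O(1)$ also comfortably absorbs the finitely many degenerate small families.

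For the matching construction I would take the intermediate families $\mathcal{S}=\mathcal{B}\cup\mathcal{T}$ built in Section~3, with the domain size $n=n_m$ chosen to be the nearest integer to $\sqrt{m\log_2 m}$. Since $\log_2 m \ll \sqrt{m\log_2 m}\ll m$, for all large $m$ this $n$ gives a satisfiable pair with $n+1<m<2^{n}$, so the construction applies and yields an $n$-separating union-closed family of size exactly $m$ with $|\Omega(\mathcal{S}_m)|=n$. Feeding the upper estimate $w(\mathcal{S}_m)\leq\frac{m\log_2 m}{2}+\frac{n(n+1)}{2}+m$ from Theorem~\ref{maintheor} into $w(\mathcal{S}_m)/n$ and using $n=\sqrt{m\log_2 m}+O(1)$ gives
\[ \frac{w(\mathcal{S}_m)}{n} \leq \frac{m\log_2 m}{2n}+\frac{n+1}{2}+\frac{m}{n} = \frac12\sqrt{m\log_2 m}+\frac12\sqrt{m\log_2 m}+O\!\left(\sqrt{m/\log_2 m}\right), \]
which is $\sqrt{m\log_2 m}+O(\sqrt{m/\log_2 m})$. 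Comparing with the lower bound, the two differ asymptotically by a factor $2+o(1)$, giving the asserted sharpness up to a multiplicative factor of at most $2$.

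All of this is elementary; the substance sits entirely in Theorems~\ref{sepweight} and~\ref{maintheor}, and I do not anticipate a genuine obstacle. The only points needing a little care are the error-term bookkeeping when $\sqrt{m\log_2 m}$ is rounded to the integer $n_m$ (and the induced base parameter $b$ in $\mathcal{B}$ is chosen accordingly), and verifying that this rounded value does produce a satisfiable pair with $n_m+1<m<2^{n_m}$ — both routine, since the relevant discrepancies are of lower order than $\sqrt{m/\log_2 m}$.
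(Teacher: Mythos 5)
Your proposal is correct and follows essentially the same route as the paper: the lower bound comes from dividing the two weight bounds (Reimer's Theorem and Theorem~\ref{sepweight}) by $n$ and optimising, and the upper bound comes from the Section~3 construction with $n$ chosen near $\sqrt{m\log_2 m}$ (the paper specialises to $m=2^r$, $n=\lceil\sqrt{2^r r}\rceil$, which suffices since only arbitrarily large examples are needed). No substantive difference.
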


\begin{proof}
The average degree in a separating family $\mathcal{S}$ is
\[\frac{1}{|\Omega(\mathcal{S})|}\sum_{x \in \Omega(\mathcal{S})} d_{\mathcal{S}}(x)=\frac{w(\mathcal{S})}{|\Omega(\mathcal{S})|}. \] 
If $\mathcal{S}$ is an $n$-separating union-closed family of size $m$, we get two lower bounds on $w(\mathcal{S})$ from Reimer's Theorem and Theorem \ref{sepweight}. Dividing through by $|\Omega(\mathcal{S})|=n$ and optimising yields
\[\frac{1}{|\Omega(\mathcal{S})|}\sum_{x \in \Omega(\mathcal{S})}d_{\mathcal{S}}(x)\geq \frac{\sqrt{|\mathcal{S}|\log_2 |\mathcal{S}|}}{2}-\frac{1}{4}.\]

The constructions from the proof of Theorem \ref{maintheor} then give us for each satisfiable pair $(n,m)$ examples of $n$-separating families of size $m$ with close to minimal average degree. In particular, take $m=2^r$ and $n= \lceil \sqrt{2^rr} \rceil$: the corresponding family we constructed has weight $2^rr+O(2^r)$. It has therefore average degree $\sqrt{r2^r}+O(\sqrt{2^r/r})=\sqrt{m\log_2 m}+O(\sqrt{m/\log_2 m}).$
\end{proof} 
We believe our bounds are in fact asymptotically sharp, and that the constructions we gave in the proof of Theorem~\ref{maintheor} are essentially the best possible. We conjecture to that effect.

\begin{conjecture}\label{strongconje}
Suppose $n=c\sqrt{m\log_2 m}+o(\sqrt{m\log_2 m})$, for some $c>0$, and that $\mathcal{S}$ is an $n$-separating union-closed family of size $m$. Then
\[w(\mathcal{S}) \geq \frac{1+c^2}{2} m\log_2 m +o(m\log_2 m).\]
\end{conjecture}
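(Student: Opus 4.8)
The aim is to show that, under the stated hypothesis, the two lower bounds at our disposal --- Reimer's Theorem, $w(\mathcal{S})\geq\frac12 m\log_2 m$, and Theorem~\ref{sepweight}, $w(\mathcal{S})\geq\binom{n}{2}$ --- can be \emph{added} rather than merely combined by taking a maximum. Since $n=c\sqrt{m\log_2 m}+o(\sqrt{m\log_2 m})$ we have $\binom{n}{2}=\frac{c^2}{2}m\log_2 m+o(m\log_2 m)$, so the target $\frac{1+c^2}{2}m\log_2 m+o(m\log_2 m)$ is exactly $\frac12 m\log_2 m+\binom{n}{2}$ up to lower-order terms. Morally this ought to hold because the $\binom{n}{2}$ of ``separation weight'' is forced by a staircase-like skeleton of about $n$ large sets, whereas Reimer's $\frac12 m\log_2 m$ should still be recoverable from the remaining, still very numerous, sets; the conjecture asserts that these two sources of weight cannot overlap.

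The key tool I would use is a \emph{fibering} inequality. Fix a partition $\Omega(\mathcal{S})=[n]=L\sqcup H$, and for a trace $C\in\mathcal{S}|_L:=\{A\cap L:A\in\mathcal{S}\}$ let $\mathcal{F}_C:=\{A\cap H:A\in\mathcal{S},\ A\cap L=C\}$. From $(A\cap L)\cup(B\cap L)=(A\cup B)\cap L$ one checks that $\mathcal{S}|_L$ is union-closed, and it is $|L|$-separating because $\mathcal{S}$ is separating; similarly each fibre $\mathcal{F}_C$ is a non-empty union-closed family on $H$, and $\sum_C|\mathcal{F}_C|=m$. Writing $|A|=|A\cap L|+|A\cap H|$ and splitting the sum over $\mathcal{S}$ according to the trace on $L$,
\[
w(\mathcal{S})=\sum_{C\in\mathcal{S}|_L}|C|\cdot|\mathcal{F}_C|+\sum_{C\in\mathcal{S}|_L}w(\mathcal{F}_C)\ \geq\ w(\mathcal{S}|_L)+\tfrac12\sum_{C\in\mathcal{S}|_L}|\mathcal{F}_C|\log_2|\mathcal{F}_C|,
\]
where we used $|\mathcal{F}_C|\geq1$ in the first sum and Reimer's Theorem on each fibre in the second; by Theorem~\ref{sepweight}, $w(\mathcal{S}|_L)\geq\binom{|L|}{2}$. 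Choosing $|L|=n-\lceil\log_2 m\rceil$ makes $\binom{|L|}{2}=\binom{n}{2}+o(m\log_2 m)$, so the conjecture would follow from the single inequality $\sum_C|\mathcal{F}_C|\log_2|\mathcal{F}_C|\geq m\log_2 m-o(m\log_2 m)$. This inequality is an equality up to lower-order terms for the extremal families of Theorem~\ref{maintheor}: there the staircase and the powerset-like base live on disjoint coordinates, so taking $H$ to be the base coordinates makes one fibre (over the empty trace) essentially the whole base, of size $\approx m$, which alone contributes $\approx m\log_2 m$ to the sum --- confirming that the fibering is the right mechanism.

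The difficulty, and where I expect the real work to lie, is proving that lower bound on $\sum_C|\mathcal{F}_C|\log_2|\mathcal{F}_C|$ for an \emph{arbitrary} separating union-closed family. By convexity of $x\mapsto x\log_2 x$ one only gets $\sum_C|\mathcal{F}_C|\log_2|\mathcal{F}_C|\geq m\log_2(m/N)$ with $N=|\mathcal{S}|_L|\leq m$, which is lossy precisely when the fibres are moderately unequal (e.g.\ $N\approx\sqrt m$ fibres each of size $\approx\sqrt m$): then this route recovers only $\tfrac14 m\log_2 m$ and falls short. What is really needed is a structural dichotomy: either some fibre over a well-chosen cut is \emph{dominant} (of size $m^{1-o(1)}$, or better $m/\mathrm{polylog}(m)$), in which case the sum is large for free; or one should recurse into the largest fibre --- itself a union-closed family on $H$, which one would want to show inherits enough of the separation structure --- peeling off staircase weight layer by layer. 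Making such a recursion close, with the right bookkeeping for the $o(m\log_2 m)$ error and a valid choice of cuts at each stage, is essentially a stability refinement of Reimer's Theorem keyed to the separation parameter $n$, and this seems to me to be the crux. A natural intermediate goal, which the fibering inequality already brings within reach, is the weaker estimate $w(\mathcal{S})\geq\frac{1+c^2}{2}m\log_2 m-\varepsilon(c)\,m\log_2 m$ with $\varepsilon(c)\to0$ as $c\to0$, or the full conjecture for $c$ below some explicit threshold --- either of which could then be fed back into the recursion.
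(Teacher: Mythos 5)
This statement is Conjecture~\ref{strongconje}: the paper does not prove it, and offers no argument beyond the heuristic that the extremal constructions of Theorem~\ref{maintheor} (a staircase sitting on a powerset-like base on disjoint coordinates) should be essentially optimal. So there is no ``paper proof'' to compare against, and your proposal must stand or fall on its own.

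It falls short of a proof, as you yourself acknowledge. The fibering setup is sound: $\mathcal{S}|_L$ is indeed a separating union-closed family on $L$, each fibre $\mathcal{F}_C$ is a nonempty union-closed family with $\sum_C|\mathcal{F}_C|=m$, and the displayed inequality $w(\mathcal{S})\geq w(\mathcal{S}|_L)+\tfrac12\sum_C|\mathcal{F}_C|\log_2|\mathcal{F}_C|$ is correct. But the entire content of the conjecture is then shifted onto the unproven estimate $\sum_C|\mathcal{F}_C|\log_2|\mathcal{F}_C|\geq m\log_2 m-o(m\log_2 m)$, which is not a consequence of anything established: convexity gives only $m\log_2(m/N)$, and as you note this loses a constant factor exactly in the critical regime $N\approx\sqrt m$. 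Worse, the inequality as stated cannot hold for an arbitrary choice of cut (if every fibre is a singleton the left side is $0$), so you would need to prove that for \emph{every} $n$-separating union-closed family there \emph{exists} a partition $L\sqcup H$ for which the separation weight on $L$ and the fibre entropy on $H$ are simultaneously near-maximal. Nothing in the paper (nor in your proposal) supplies such a structural dichotomy or makes the proposed recursion close; the ``dominant fibre or recurse'' step is a plan, not an argument. What you have is a plausible reduction of the conjecture to a stability-type strengthening of Reimer's Theorem, which is a reasonable research direction but leaves the statement exactly as open as the paper leaves it.
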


\section{Minimal $l$-fold weight}

Let $\mathcal{S}$ be a separating union-closed family. Recall that the $l$-fold weight of a family $\mathcal{S}$ is 
\[w_l(\mathcal{S})=\sum_{A\in \mathcal{S}} \binom{|A|}{l}.\]
In the previous section we obtained lower-bounds for $w_1(\mathcal{S})$ in terms of $|\mathcal{S}|$ and $|\Omega(\mathcal{S})|$ and gave constructions showing these were asymptotically sharp up to a multiplicative constant. Using easy generalisations of Reimer's Theorem and Theorem~\ref{sepweight}, we can obtain similar results concerning $w_l(\mathcal{S})$. As a corollary, we will obtain lower bounds on the expected number of sets containing a random $l$-subset of $\Omega(\mathcal{S})$, and show these are again asymptotically sharp up to a constant.

Results in this section are motivated by the remark that repeated iterations of the classical union-closed sets conjecture imply the following stronger looking statement:
\begin{conjecture}[Generalised union-closed sets conjecture]
Let $\mathcal{S}$ be a union-closed family. Then for every integer $l:\ 1\leq l \leq \log_2|\mathcal{S}|$, there is an $l$-subset $X$ of $\Omega(\mathcal{S})$ which is contained in at least $|\mathcal{S}|/2^l$ members of $\mathcal{S}$.
\end{conjecture}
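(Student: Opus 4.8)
**Proof proposal for the Generalised union-closed sets conjecture (as a consequence of iterated applications of Conjecture~\ref{ucconj}).**

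The plan is to derive the statement from the ordinary union-closed sets conjecture by induction on $l$, peeling off one high-degree element at a time and passing to an induced subfamily. For $l=1$ the assertion is exactly Conjecture~\ref{ucconj}. Suppose $l \geq 2$ and that the claim holds for $l-1$; let $\mathcal{S}$ be union-closed with $l \leq \log_2 |\mathcal{S}|$. First I would apply Conjecture~\ref{ucconj} to $\mathcal{S}$ to obtain an element $x \in \Omega(\mathcal{S})$ with $d_{\mathcal{S}}(x) \geq \tfrac12 |\mathcal{S}|$. Then I would consider the induced family $\mathcal{S}[x] = \{A \setminus \{x\} \mid x \in A \in \mathcal{S}\}$ on domain $\Omega(\mathcal{S}) \setminus \{x\}$, which satisfies $|\mathcal{S}[x]| = d_{\mathcal{S}}(x) \geq \tfrac12 |\mathcal{S}|$.

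The key step is to check that $\mathcal{S}[x]$ is again union-closed, so that the inductive hypothesis applies to it: if $x \in A, B \in \mathcal{S}$ then $x \in A \cup B \in \mathcal{S}$, and $(A \setminus \{x\}) \cup (B \setminus \{x\}) = (A \cup B) \setminus \{x\} \in \mathcal{S}[x]$. Next I would verify that $l-1 \leq \log_2 |\mathcal{S}[x]|$: since $|\mathcal{S}[x]| \geq \tfrac12 |\mathcal{S}|$ we have $\log_2 |\mathcal{S}[x]| \geq \log_2 |\mathcal{S}| - 1 \geq l - 1$, so the hypothesis is legitimately available. Applying the inductive hypothesis to $\mathcal{S}[x]$ with parameter $l-1$ yields an $(l-1)$-subset $Y \subseteq \Omega(\mathcal{S}) \setminus \{x\}$ contained in at least $|\mathcal{S}[x]|/2^{l-1}$ members of $\mathcal{S}[x]$.

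Finally I would set $X = Y \cup \{x\}$, an $l$-subset of $\Omega(\mathcal{S})$. A set $A \in \mathcal{S}$ contains $X$ precisely when $x \in A$ and $Y \subseteq A \setminus \{x\}$, i.e.\ exactly when $A \setminus \{x\}$ is one of the members of $\mathcal{S}[x]$ containing $Y$; hence the number of $A \in \mathcal{S}$ with $X \subseteq A$ equals the number of members of $\mathcal{S}[x]$ containing $Y$, which is at least
\[
\frac{|\mathcal{S}[x]|}{2^{l-1}} \geq \frac{|\mathcal{S}|/2}{2^{l-1}} = \frac{|\mathcal{S}|}{2^l},
\]
completing the induction. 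The only delicate point is bookkeeping on the domain and on the logarithmic bound when passing from $\mathcal{S}$ to $\mathcal{S}[x]$; I do not foresee a genuine obstacle, since each peeling step loses at most a factor $2$ in size and hence at most $1$ in $\log_2$, exactly matching the loss of $1$ in the index $l$. (Of course this is a conditional statement: it is a theorem only modulo Conjecture~\ref{ucconj}, which remains open.)
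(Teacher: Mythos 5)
Your proposal is correct and is exactly the argument the paper has in mind: the paper does not write out a proof but simply remarks that ``repeated iterations of the classical union-closed sets conjecture imply'' the generalised statement, and your induction via the induced family $\mathcal{S}[x]$ (which is union-closed, has size $d_{\mathcal{S}}(x)\geq |\mathcal{S}|/2$, and so loses at most $1$ in $\log_2$ per step) is precisely that iteration, carried out with the bookkeeping made explicit. Like the paper, your derivation is of course conditional on Conjecture~\ref{ucconj}.
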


Let us first show how Reimer's Theorem can be immediately generalised to $l$-fold weights.

\begin{lemma} \label{reimerlweight}
Let $l \in \mathbb{N}$ and let $\mathcal{S}$ be a union-closed family. Then

\[w_l(\mathcal{S}) > |\mathcal{S}| \binom{\log_2 |\mathcal{S}|/2}{l}. \]

\end{lemma}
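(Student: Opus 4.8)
The plan is to deduce Lemma~\ref{reimerlweight} from Reimer's Average Set Size Theorem by a convexity argument. Recall Reimer gives $\frac{1}{|\mathcal{S}|}\sum_{A\in\mathcal{S}}|A| \geq \frac{\log_2|\mathcal{S}|}{2}$; write $m=|\mathcal{S}|$ and let $\mu = \frac{1}{m}\sum_{A}|A|$ denote the average set size, so $\mu \geq \tfrac{1}{2}\log_2 m$. The quantity we want to bound below is $w_l(\mathcal{S}) = \sum_A \binom{|A|}{l}$, i.e. $m$ times the average of $\binom{|A|}{l}$ over $A\in\mathcal{S}$. The natural tool is Jensen's inequality: if $x\mapsto \binom{x}{l} = \frac{x(x-1)\cdots(x-l+1)}{l!}$ were convex on the relevant range we would immediately get $\frac{1}{m}\sum_A\binom{|A|}{l} \geq \binom{\mu}{l} \geq \binom{\log_2 m/2}{l}$, the last step using monotonicity of $\binom{x}{l}$ in $x$ for $x\geq l-1$.

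First I would address convexity. The function $\binom{x}{l}$, viewed as the polynomial $p(x)=\frac{1}{l!}\prod_{i=0}^{l-1}(x-i)$, is not convex on all of $[0,\infty)$ — it dips below zero and oscillates in sign for $x\in(0,l-1)$ — so I cannot apply Jensen blindly over all sets $A$. The clean way around this is to note that $\binom{|A|}{l} = 0$ for every $A$ with $|A| < l$, so such sets contribute nothing to $w_l(\mathcal{S})$; and on the range $x \geq l-1$ the function $\binom{x}{l}$ is genuinely convex (each factor $x-i$ is nonnegative there, and one checks the second derivative is a sum of products of nonnegative linear terms, hence $\geq 0$). So I would split $\mathcal{S}$ into $\mathcal{S}_{<l}=\{A: |A|<l\}$ and $\mathcal{S}_{\geq l}$, discard the former, and apply Jensen only to the latter — but Jensen with respect to $\mu' := $ the average of $|A|$ over $\mathcal{S}_{\geq l}$, which satisfies $\mu' \geq \mu \geq \tfrac12\log_2 m$. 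Alternatively, and perhaps more slickly, one can extend $\binom{x}{l}$ to the convex function $g(x)=\max(\binom{x}{l},0)$, which is convex on all of $\mathbb{R}$ (it is the maximum of the convex piece and $0$, suitably interpreted — more carefully, $g$ equals $0$ on $(-\infty,l-1]$ and the convex branch thereafter, and the two pieces meet with matching value and nonnegative slope, so $g$ is convex everywhere), and then $\frac1m\sum_A\binom{|A|}{l} = \frac1m\sum_A g(|A|) \geq g(\mu) = \binom{\mu}{l} \geq \binom{\log_2 m /2}{l}$ provided $\mu \geq l-1$; if $\mu < l-1$ the claimed bound $\binom{\log_2 m/2}{l}$ may itself be $\leq 0$ or require the strict-inequality slack, so I would handle that degenerate regime separately by noting the right-hand side is then non-positive or otherwise trivially beaten.

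The remaining point is the strict inequality ``$>$'' in the statement rather than ``$\geq$''. Reimer's theorem has equality only when $\mathcal{S}$ is a powerset, in which case $\mathcal{S}=\mathcal{P}[k]$ has $|A|$ taking all values $0,1,\ldots,k$ — not a constant — so Jensen's inequality is strict there (Jensen is an equality only when $|A|$ is constant on the relevant subfamily). Conversely, if $\mathcal{S}$ is not a powerset, Reimer's inequality $\mu > \tfrac12\log_2 m$ is already strict and monotonicity of $\binom{x}{l}$ gives the strict conclusion. So in every case at least one of the two inequalities in the chain is strict, yielding the strict bound. I would present this as: by Reimer, $\mu \geq \tfrac12\log_2 m$; by convexity of $g$ and Jensen, $w_l(\mathcal{S}) = m\cdot\frac1m\sum_A g(|A|) \geq m\, g(\mu) \geq m\binom{\log_2 m/2}{l}$; and strictness follows from the powerset-versus-non-powerset case split just described.

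The main obstacle I anticipate is purely bookkeeping rather than conceptual: pinning down precisely the range of $x$ on which $\binom{x}{l}$ (or its truncation $g$) is convex, and making sure the ``average set size'' $\mu$ lands in that range — equivalently, cleanly disposing of the small-$m$ or large-$l$ corner cases where $\tfrac12\log_2 m$ might be smaller than $l-1$ and the target quantity $\binom{\log_2 m/2}{l}$ is non-positive. In that corner the lemma is vacuous or near-vacuous and should be dispatched in a line, but one must remember to say so. Everything else is a one-line invocation of Reimer plus one-line invocation of Jensen.
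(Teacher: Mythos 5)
Your proposal is correct and follows essentially the same route as the paper: apply Jensen's inequality to $x\mapsto\binom{x}{l}$ to get $w_l(\mathcal{S})\geq|\mathcal{S}|\binom{\mu}{l}$, feed in Reimer's bound $\mu\geq\frac{1}{2}\log_2|\mathcal{S}|$, and obtain strictness by observing that equality in Jensen forces all sets to have equal size while equality in Reimer forces $\mathcal{S}$ to be a powerset, which are incompatible. In fact you are more careful than the paper, which simply asserts that $\binom{x}{l}$ is convex on $\mathbb{R}^+$ (false near the origin for $l\geq 3$); your replacement of $\binom{x}{l}$ by the convex truncation that vanishes on $(-\infty,l-1]$, which agrees with $\binom{k}{l}$ at every nonnegative integer $k$, is the right way to patch that point.
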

\begin{proof}
The function $x \mapsto \binom{x}{l}$ is convex in $\mathbb{R}^+$. By Jensen's inequality, it follows that
\[w_l(\mathcal{S}) = \sum_{A\in \mathcal{S}} \binom{|A|}{l} \geq |\mathcal{S}| \binom{\sum_{A\in \mathcal{S}} |A|/|\mathcal{S}|}{l}\]
with equality if and only if all the members of $\mathcal{S}$ have the same size. On the other hand, Reimer's average set size theorem tell us
\[\frac{\sum_{A\in \mathcal{S}} |A|}{|\mathcal{S}|} \geq \frac{\log_2 |\mathcal{S}|}{2},\] 
with equality if and only if $\mathcal{S}$ is a powerset (in which case not all the member of $\mathcal{S}$ have the same size). Thus
\[w_l(\mathcal{S}) > |\mathcal{S}| \binom{\log_2 |\mathcal{S}|/2}{l},\]
and this inequality is strict (since we cannot have equality in both Jensen's inequality and Reimer's Theorem.)

Now, the $l$-fold weight of a powerset $P_r =\mathcal{P}([r])$ is
\[w_l(P_r) =\sum_{A:\ |A|=l} \sum_B 1_{A \subseteq B}=2^{r-l} \binom{r}{l}> 2^r \binom{r/2}{l}.\]
However for a fixed $l$,
\[\frac{w_l(P_r)}{2^r\binom{r/2}{l}} \rightarrow 1 \ \textrm{as $r\rightarrow \infty$,} \]
so the  bound on $w_l$ is still asymptotically sharp.
\end{proof}

Next, let us generalise our result that for $\mathcal{S}$ an $n$-separating union-closed family, 
\[w_1(\mathcal{S}) \geq \binom{n}{2}.\]

Again this comes as an easy consequence of Lemmar~\ref{sepstruc}. 

\begin{lemma}\label{seplweight}
Let $l \in \mathbb{N}$ and let $\mathcal{S}$ be a separating union-closed family with $\Omega(\mathcal{S})=[n]$ and elements of $\Omega$ labelled in order of increasing degree $d_{\mathcal{S}}$. Then
\[w_l(\mathcal{S}) \geq \binom{n}{l+1},\]

with equality if and only if $\mathcal{S}$ is of the form 
\[\mathcal{S}=\left\{[n]\setminus[1],[n]\setminus[2], [n]\setminus[3],\ldots [n]\setminus[n-l] \right\}\cup \mathcal{R},\]
where $\mathcal{R}\cup \{[n]\setminus[n-l]\}$ is a separating and union-closed subfamily of $\mathcal{P}([n]\setminus[n-l])$. 
\end{lemma}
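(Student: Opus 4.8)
The plan is to mimic the proof of Theorem~\ref{sepweight}, extracting the same staircase-like collection of sets from $\mathcal{S}$ via Lemma~\ref{sepstruc} but now counting $l$-subsets instead of singletons. First I would invoke Lemma~\ref{sepstruc} to obtain, for each $i \in [n-1]$, a set $A_i \in \mathcal{S}$ of the form $A_i = ([n]\setminus[i]) \cup X_i$ with $X_i \subseteq [i-1]$, these $n-1$ sets being distinct. The key observation is that $[n]\setminus[i] \subseteq A_i$, so $\binom{|A_i|}{l} \geq \binom{n-i}{l}$. To avoid double-counting when summing $\binom{|A_i|}{l}$ over $i$, I would instead directly count pairs $(B, \{j_1 < j_2 < \cdots < j_{l+1}\})$ where $B \in \mathcal{S}$ and $\{j_1, \ldots, j_{l+1}\} \subseteq B$: such a pair is counted $\binom{|B|}{l+1}$ times if we forget $j_1$, but more cleanly, $w_l(\mathcal{S}) = \sum_{B \in \mathcal{S}} \binom{|B|}{l}$ and I want a lower bound of $\binom{n}{l+1}$.

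The cleanest route is the following injection/counting argument. Consider the map sending an $(l+1)$-subset $\{j_0 < j_1 < \cdots < j_l\}$ of $[n]$ to the pair $(A_{j_0}, \{j_1, \ldots, j_l\})$: since $j_1, \ldots, j_l > j_0$ we have $\{j_1, \ldots, j_l\} \subseteq [n]\setminus[j_0] \subseteq A_{j_0}$, and this pair is one of the pairs counted by $w_l(\mathcal{S}) = \sum_{B \in \mathcal{S}} \sum_{Y \in \binom{B}{l}} 1$. This map is injective: from $A_{j_0}$ one recovers $j_0$ as $\min([n] \setminus A_{j_0})$ — wait, one must be careful because $A_{j_0}$ may contain elements of $[j_0-1]$; but $j_0 \notin A_{j_0}$ and all of $[n]\setminus[j_0]$ lies in $A_{j_0}$, so $j_0$ is the unique element with $j_0 \notin A_{j_0}$ but $j_0+1, \ldots, n \in A_{j_0}$ — in any case distinct $(l+1)$-subsets give distinct pairs since the subset is $\{j_0\} \cup \{j_1,\ldots,j_l\}$ and both coordinates are recoverable. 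Hence $w_l(\mathcal{S}) \geq \binom{n}{l+1}$.

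For the equality characterization: equality forces that every pair $(B, Y)$ counted by $w_l$ arises from this map, i.e. $\mathcal{S}$ contains no $(l+1)$-subset of any $B$ beyond those of the form $\{j_0\} \cup Y$ with $B = A_{j_0}$. First this forces $A_i = [n] \setminus [i]$ exactly for $i \le n-l-1$ (if some $X_i \neq \emptyset$ we pick up extra $(l+1)$-subsets); for $i \geq n-l$, $A_i$ has size $\le l$ so contributes nothing to $w_l$ and is essentially free. So $\mathcal{S}$ must contain $[n]\setminus[1], \ldots, [n]\setminus[n-l]$, and any remaining members must be subsets of $[n]\setminus[n-l]$ of size $\le l$ (so as not to contribute to $w_l$), together with whatever is forced by union-closure and separation — which is exactly the condition that $\mathcal{R} \cup \{[n]\setminus[n-l]\}$ be a separating union-closed subfamily of $\mathcal{P}([n]\setminus[n-l])$. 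I would check both directions: that families of this form have $w_l = \binom{n}{l+1}$ (the sets $[n]\setminus[i]$ for $i \le n-l-1$ together with $[n]\setminus[n-l]$ contribute $\sum_{i=1}^{n-l}\binom{n-i}{l} = \binom{n}{l+1}$ by the hockey-stick identity, and all of $\mathcal{R}$ has sets of size $< l$), and conversely that equality forces this shape.

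The main obstacle I anticipate is the equality analysis, specifically making precise why the ``leftover'' structure on $[n]\setminus[n-l]$ must be a separating union-closed family: separation of $\mathcal{S}$ restricted to the coordinates $\{n-l+1, \ldots, n\}$ cannot be achieved by the sets $[n]\setminus[i]$ (each contains all of $[n]\setminus[n-l]$ or none of the relevant coordinate in a trivial way), so it must come from $\mathcal{R}$, and union-closure of $\mathcal{S}$ must restrict correctly — one has to verify that intersecting the closure conditions with $[n]\setminus[n-l]$ and using that all of $[n]\setminus[n-l] \subseteq A_{n-l}$ yields precisely the stated condition, with no interference from the large staircase sets. This bookkeeping, together with correctly handling the degenerate small cases (e.g. $l \geq n$, or $n - l \leq 1$), is where care is needed; the inequality itself is a short hockey-stick-style count.
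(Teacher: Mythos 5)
Your inequality argument is sound and is essentially the paper's own: the injection sending $\{j_0<j_1<\cdots<j_l\}$ to the pair $(A_{j_0},\{j_1,\ldots,j_l\})$ is a bijective repackaging of the paper's chain $w_l(\mathcal{S})\geq\sum_{i\in[n-1]}\binom{|A_i|}{l}\geq\sum_{i\in[n-1]}\binom{n-i}{l}=\binom{n}{l+1}$, and injectivity does follow from the distinctness of the $A_i$ guaranteed by Lemma~\ref{sepstruc}. So the first half is fine and takes the same route.

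The gap is in the equality analysis, at the step ``any remaining members must be subsets of $[n]\setminus[n-l]$ of size $\le l$ (so as not to contribute to $w_l$)''. Not contributing to $w_l$ only forces a leftover set $B$ to have $|B|<l$ (note also that a set of size exactly $l$ contributes $1$, not $0$, which is why equality forces $A_i=[n]\setminus[i]$ for all $i\le n-l$ and not just $i\le n-l-1$ as you state); it does \emph{not} force $B$ to avoid $[n-l]$. Nothing in your argument rules out, say, $B=\{1\}\in\mathcal{S}$ when $l\geq 2$. Excluding such sets is precisely where the paper has to work: if $B$ meets $[n-l]$ and misses $n-l+1$, then by union-closure $B\cup A_{n-l+1}$ is a member of $\mathcal{S}$ of size at least $l$ that omits $n-l+1$, hence is not among the forced sets $[n]\setminus[i]$ with $i\le n-l$, contradicting the equality conditions; iterating shows $B\supseteq\{n-l+1,\ldots,n-1\}$ and then that $B$ must itself be one of the $A_i$. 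This union-closure argument is the substantive content of the equality case and is absent from your proposal: you flag the restriction of separation and union-closure to $[n]\setminus[n-l]$ as the delicate point, but the real crux is one step earlier, in showing that no member of $\mathcal{S}$ other than the $A_i$ can meet $[n-l]$ at all.
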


\begin{proof}
By Lemma~\ref{sepstruc}, $\mathcal{S}$ contains at least $n-1$ distinct sets $A_i$, $i\in[n-1]$, of the form
\[A_i =\left\{i+1,i+2 \ldots n\right\}\cup X_i, \ X_i \subseteq [i-1].\]
Thus 
\begin{align*}
w_l(\mathcal{S})&\geq \sum_{i \in [n-1]} \binom{|A_i|}{l}\\
& \geq \sum_{i \in [n-1]} \binom{n-i}{l}= \binom{n}{l+1}.\\
\end{align*}

Equality may occur in the above if and only if $A_i=[n]\setminus[i]$ for all $i\leq n-l$ and $\mathcal{S}$ contains no other set of size greater or equal to $l$. Suppose this is the case, and that 
$\mathcal{S}$ contains a set $B$ with $B \cap [n-l]\neq \emptyset$.

Then $B$ contains some $x\in [n-l]$. Suppose it does not contain $n-l+1$. Then by union-closure $B\cup A_{n-l+1}$ is an element of $\mathcal{S}$ of size at least $|\{x,  n-l+2, \ldots n\}|=l$. As it does not contain $n-l+1$, it is not amongst the sets $A_i: i\leq n-l$ we identified earlier, a contradiction. $B$ therefore contains $n-l+1$. By iterating this argument, we see that $B$ must also contain all of $n-l+2, n-l+3, \ldots n-1$. But then $B$ has size at least $|\{x,n-l+1, n-l+2, \ldots n-1\}|=l$. If it does not contain $n$, it is distinct from the sets $A_i: i \leq n-l$ we identified earlier, which is a contradiction. If it does contain $n$, then it has size at least $l+1>l$. This is only possible if $B=A_i$ for some $i \in [n-l]$.

It follows that $\mathcal{S}= \{[n], [n]\setminus\{1\}, [n]\setminus\{2\} \ldots [n] \setminus\{n-l\}\}\cup \mathcal{R}$ with $\mathcal{R}\cup \{[n]\setminus [n-l]\}$ a union-closed and separating subset of $\mathcal{P}([n]\setminus[n-l])$ as required.  
\end{proof}

With Lemmas~\ref{reimerlweight} and~\ref{seplweight} in hand, we can now generalise Theorem~\ref{maintheor}.

\begin{theorem}\label{mainthehorla}
Let $(n,m)$ be a satisfiable pair of integers, and let $l\in \mathbb{N}$. Suppose $\mathcal{S}$ is an $n$-separating union-closed family of size $m$ with minimal $l$-fold weight $w_l(|\mathcal{S}|)=w_l$. Then,
\[\max\left(\binom{n}{l+1},m \binom{\log_2m/2}{l}\right) \leq w_l \]
and
\[w_l\leq \left(\binom{n}{l+1}+m \binom{\log_2m/2}{l}\right)(1+o(1)).\]

\end{theorem}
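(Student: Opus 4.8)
\textbf{Proof proposal for Theorem~\ref{mainthehorla}.}

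The plan is to mirror the structure of the proof of Theorem~\ref{maintheor} exactly, but with every weight replaced by an $l$-fold weight. The lower bound is immediate: Lemma~\ref{reimerlweight} gives $w_l(\mathcal{S}) > m\binom{\log_2 m/2}{l}$ for any union-closed family of size $m$, and Lemma~\ref{seplweight} gives $w_l(\mathcal{S}) \geq \binom{n}{l+1}$ for any $n$-separating union-closed family. Since $\mathcal{S}$ is both, $w_l \geq \max\left(\binom{n}{l+1}, m\binom{\log_2 m/2}{l}\right)$, and there is nothing more to do for the first displayed inequality. The real content is the upper bound, and for that I would re-use verbatim the intermediate construction $\mathcal{S} = \mathcal{B} \cup \mathcal{T}$ built in Section~3 for a satisfiable pair $(n,m)$: the base $\mathcal{B}$ is the union of subcubes of $\mathcal{P}[b+1]$ of total size $m - n + b + 1$, and the staircase-like top is $\mathcal{T} = \{[b+2],[b+3],\ldots,[n]\}$. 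This family is still $n$-separating, union-closed, and of size $m$, so it gives a valid upper bound on the minimal $l$-fold weight.

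So the task reduces to estimating $w_l(\mathcal{S}) = w_l(\mathcal{B}) + w_l(\mathcal{T})$. For $w_l(\mathcal{T})$: the sets in $\mathcal{T}$ have sizes $b+2, b+3, \ldots, n$, so $w_l(\mathcal{T}) = \sum_{k=b+2}^{n}\binom{k}{l} = \binom{n+1}{l+1} - \binom{b+2}{l+1} \leq \binom{n+1}{l+1} = \binom{n}{l+1}(1+o(1))$, where the $o(1)$ is in $n$ (uniformly as long as $l$ is fixed or grows slowly; I would state the theorem with $l$ fixed, or track the dependence as needed). For $w_l(\mathcal{B})$: in the non-technical case $\mathcal{B} = \mathcal{P}[b+1]$ and $w_l(\mathcal{P}[b+1]) = 2^{b+1-l}\binom{b+1}{l}$, which by the computation already in the proof of Lemma~\ref{reimerlweight} is $2^{b+1}\binom{(b+1)/2}{l}(1+o(1)) \leq m\binom{\log_2 m/2}{l}(1+o(1))$ since $2^{b+1} \leq m - n + b + 1 \leq m$ and $b+1 \leq \log_2 m$ up to lower order terms. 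In the technical case $\mathcal{B} = \bigcup_i Q_i$ is a disjoint union of subcubes of dimensions $b_1 > b_2 > \cdots > b_t$, each $Q_i$ being a translate of $\mathcal{P}[b_i]$; hence $w_l(Q_i) = 2^{b_i - l}\binom{b_i}{l}$ and $w_l(\mathcal{B}) = \sum_i 2^{b_i-l}\binom{b_i}{l}$. Since $b_1 = b$ dominates (the tail $\sum_{i\geq 2} 2^{b_i}$ is at most $2^{b_1}$), this sum is $2^{b-l}\binom{b}{l}(1+o(1)) = 2^b\binom{b/2}{l}(1+o(1))$, and again $2^b \leq m$, $b \leq \log_2 m + O(1)$, so $w_l(\mathcal{B}) \leq m\binom{\log_2 m/2}{l}(1+o(1))$. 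Adding the two estimates gives $w_l \leq \left(\binom{n}{l+1} + m\binom{\log_2 m/2}{l}\right)(1+o(1))$.

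The main obstacle I anticipate is bookkeeping the $(1+o(1))$ factors cleanly, in particular making sure the error terms are uniform over the whole satisfiable range $\log_2 m \lesssim n \lesssim m$ and do not secretly depend on $l$ in a way that breaks the statement. Two points need care: first, passing from $2^{b-l}\binom{b}{l}$ to $2^b\binom{b/2}{l}$ requires the elementary estimate $\binom{b}{l}/\binom{b/2}{l} \to 2^l$ as $b\to\infty$ with $l$ fixed (the same fact used at the end of Lemma~\ref{reimerlweight}), so the theorem is cleanest stated for fixed $l$, or with an explicit hypothesis like $l = o(b)$; second, controlling the tail $\sum_{i\geq 2} 2^{b_i-l}\binom{b_i}{l}$ against the leading term $2^{b_1-l}\binom{b_1}{l}$ uses $b_i \leq b_1 - (i-1)$ and the monotonicity of $k \mapsto 2^k\binom{k}{l}$, which is routine but should be stated. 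Modulo these calculus-of-error-terms issues, the proof is a direct transcription of Theorem~\ref{maintheor} with $\binom{\cdot}{l}$ in place of $|\cdot|$, and I would present it as such, citing the already-established computations in Lemmas~\ref{reimerlweight} and~\ref{seplweight} and in the construction preceding Theorem~\ref{maintheor} rather than redoing them.
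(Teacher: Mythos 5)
Your overall strategy is exactly the paper's: the lower bound is immediate from Lemmas~\ref{reimerlweight} and~\ref{seplweight}, and the upper bound comes from computing the $l$-fold weight of the family $\mathcal{B}\cup\mathcal{T}$ constructed in Section~3, the only real work being the $l$-fold analogue of Lemma~\ref{techbound}. Your treatment of $\mathcal{T}$ and of the non-technical case is fine. But two steps in your estimate of $w_l(\mathcal{B})$ in the technical case need repair. First, the subcubes are not translates of powersets in the sense you use: $Q_i=\{X\cup\{b_{i-1}\}\mid X\subseteq[b_i]\}$, so a member of $Q_i$ has size $|X|+1$ and
\[ w_l(Q_i)=\sum_{X\subseteq[b_i]}\binom{|X|+1}{l}=\binom{b_i}{l}2^{b_i-l}+\binom{b_i}{l-1}2^{b_i-l+1}, \]
not $\binom{b_i}{l}2^{b_i-l}$; the extra term is lower order for the dominant cube but must appear.

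Second, and more seriously, your claim that $\sum_i 2^{b_i-l}\binom{b_i}{l}=2^{b-l}\binom{b}{l}(1+o(1))$ ``because the tail $\sum_{i\geq 2}2^{b_i}$ is at most $2^{b_1}$'' does not follow: a tail comparable to the leading term makes the sum up to \emph{twice} the leading term, not $(1+o(1))$ times it (take $|\mathcal{B}|=2^{b+1}-1=2^b+2^{b-1}+\cdots+1$). Replacing $2^b$ by $m$ at the next step then costs you a factor of $2$ against the claimed upper bound, which is precisely the loss the theorem is supposed to avoid. The fix --- and what the paper does in its generalisation of Lemma~\ref{techbound} --- is not to compare the tail to the leading term at all, but to factor out the largest binomial coefficient and use the exact identity $\sum_i 2^{b_i}=|\mathcal{B}|\leq m$:
\[ w_l(\mathcal{B})\leq\Bigl(\binom{b}{l}+2\binom{b}{l-1}\Bigr)2^{-l}\sum_i 2^{b_i}=\Bigl(\binom{b}{l}+2\binom{b}{l-1}\Bigr)2^{-l}|\mathcal{B}|, \]
after which $\bigl(\binom{b}{l}+2\binom{b}{l-1}\bigr)2^{-l}=\binom{b/2}{l}(1+O(l/b))\leq\binom{\log_2 m/2}{l}(1+o(1))$ for fixed $l$ gives the stated bound. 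With that substitution your argument coincides with the paper's.
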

Again the lower and upper bounds on $w_l$ are asymptotically the same except when $n \sim m^{1/(l+1)} \log_2 m^{1-1/(l+1)}$.

\begin{proof}
As this proof is essentially the same as that of Theorem \ref{maintheor}, we omit the details. The lower bound on $w_l$ follows from Lemmas~\ref{reimerlweight} and~\ref{seplweight}. The upper bound follows from considering the $l$-fold weight of the families we introduced in the proof of Theorem~\ref{maintheor}. The only difficulty involved lies in adapting Lemma~\ref{techbound} to $l$-fold weights. We state and prove below the required generalisation in the ``technical case''.
\begin{lemma}
Let $\mathcal{B}$ be as defined in the previous section, and assume $|\mathcal{B}|=2^b+2^{b_2}+\ldots 2^{b_t}$. Then
\[w_l(\mathcal{B})< \left(1+\frac{2l}{\log_2 |\mathcal{B}|}\right)\frac{|\mathcal{B}|}{l!}\left( \frac{\log_2 |\mathcal{B}|}{2}\right)^l.\]
\end{lemma}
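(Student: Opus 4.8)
The plan is to mirror the structure of Lemma~\ref{techbound}, replacing the linear term $|A|$ by $\binom{|A|}{l}$ and bounding the resulting sum over subcubes. Recall that in the technical case $\mathcal{B}=\bigcup_{i=1}^t Q_i$, where $Q_i$ is a $b_i$-dimensional subcube of $\mathcal{P}([b+1])$ (with $b_1=b$), the $Q_i$ are pairwise disjoint, and $|\mathcal{B}|=\sum_i 2^{b_i}$. The sets in $Q_i$ range over all subsets of a fixed $b_i$-element ground set together with one extra fixed element $b_{i-1}$; hence a set of size $k+1$ in $Q_i$ (for $0\le k\le b_i$) arises in $\binom{b_i}{k}$ ways, and its $l$-fold weight contribution is $\binom{k+1}{l}$. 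So I would first write
\[
w_l(\mathcal{B})=\sum_{i=1}^t \sum_{k=0}^{b_i}\binom{b_i}{k}\binom{k+1}{l}
=\sum_{i=1}^t \sum_{A\subseteq [b_i]\cup\{b_{i-1}\}}\binom{|A|}{l}
=\sum_{i=1}^t w_l\bigl(\mathcal{P}([b_i+1])\bigr).
\]

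Next I would invoke the exact value of the $l$-fold weight of a powerset, already computed in the proof of Lemma~\ref{reimerlweight}: $w_l(\mathcal{P}([r]))=2^{r-l}\binom{r}{l}$. Thus $w_l(\mathcal{B})=\sum_i 2^{b_i-l}\binom{b_i+1}{l}$. Since $b_i\le b$ for all $i$ and $b=b_1\le\log_2|\mathcal{B}|$, each factor $2^{-l}\binom{b_i+1}{l}\le 2^{-l}\binom{b+1}{l}$, and summing over $i$ with $\sum_i 2^{b_i}=|\mathcal{B}|$ gives $w_l(\mathcal{B})\le |\mathcal{B}|\,2^{-l}\binom{b+1}{l}$. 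It then remains to show
\[
|\mathcal{B}|\,2^{-l}\binom{b+1}{l} < \left(1+\frac{2l}{\log_2|\mathcal{B}|}\right)\frac{|\mathcal{B}|}{l!}\left(\frac{\log_2|\mathcal{B}|}{2}\right)^l,
\]
i.e. after cancelling $|\mathcal{B}|$ and multiplying by $l!\,2^l$, that $\binom{b+1}{l}\,l! = (b+1)b\cdots(b-l+2) < \bigl(1+\tfrac{2l}{\log_2|\mathcal{B}|}\bigr)(\log_2|\mathcal{B}|)^l$. Using $b\le\log_2|\mathcal{B}|$ (in the technical case $b_1=b$ and $2^b+1\le|\mathcal{B}|$, so actually $b<\log_2|\mathcal{B}|$), the product $(b+1)b\cdots$ has at most $l$ factors each at most $b+1\le \log_2|\mathcal{B}|+1$, so it is at most $(\log_2|\mathcal{B}|+1)^l \le (\log_2|\mathcal{B}|)^l\bigl(1+\tfrac{1}{\log_2|\mathcal{B}|}\bigr)^l$, and a crude expansion of $(1+x)^l\le 1+2lx$ for $x$ small (valid once $\log_2|\mathcal{B}|$ is not too tiny, e.g. $\log_2|\mathcal{B}|\ge 2l$) closes the estimate with room to spare; the boundary small cases can be absorbed into the constant or checked directly.

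The main obstacle I anticipate is purely bookkeeping: getting the slack factor exactly of the stated form $1+\tfrac{2l}{\log_2|\mathcal{B}|}$ rather than something slightly weaker, since the naive bound $(\log_2|\mathcal{B}|+1)^l$ must be compared carefully against $(1+\tfrac{2l}{\log_2|\mathcal{B}|})(\log_2|\mathcal{B}|)^l$. This needs the inequality $(1+1/N)^l \le 1+2l/N$ for $N=\log_2|\mathcal{B}|$, which holds whenever $N\ge l$ (indeed $(1+1/N)^l\le e^{l/N}\le 1+l/N+ (l/N)^2/(1-l/N)\le 1+2l/N$ for $l/N\le 1/2$), together with the fact that the descending product $(b+1)b\cdots(b-l+2)$ is genuinely smaller than $(b+1)^l$ when $l\ge 2$, giving additional slack for the small range of $N$. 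I would organize the write-up as: (i) the identity $w_l(\mathcal{B})=\sum_i 2^{b_i-l}\binom{b_i+1}{l}$; (ii) the monotonicity bound $\le|\mathcal{B}|2^{-l}\binom{b+1}{l}$; (iii) the elementary inequality comparing $\binom{b+1}{l}l!$ with $(1+2l/\log_2|\mathcal{B}|)(\log_2|\mathcal{B}|)^l$.
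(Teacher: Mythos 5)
There is a genuine gap at the very first step, and it propagates through the whole argument. The middle expression in your opening display is not equal to the other two: $Q_i$ is not the full powerset of $[b_i]\cup\{b_{i-1}\}$ but only its upper half, namely the sets containing the apex element $b_{i-1}$. Your leftmost sum $\sum_{k=0}^{b_i}\binom{b_i}{k}\binom{k+1}{l}$ is indeed $w_l(Q_i)$, but it does not equal $\sum_{A\subseteq[b_i]\cup\{b_{i-1}\}}\binom{|A|}{l}=2^{b_i+1-l}\binom{b_i+1}{l}$. Expanding $\binom{k+1}{l}=\binom{k}{l}+\binom{k}{l-1}$ by Pascal's rule gives the correct closed form $w_l(Q_i)=2^{b_i-l}\binom{b_i}{l}+2^{b_i-l+1}\binom{b_i}{l-1}=2^{b_i-l}\bigl(\binom{b_i}{l}+2\binom{b_i}{l-1}\bigr)$. (Sanity check with $b_i=1$, $l=1$: the two sets $\{b_{i-1}\}$ and $\{1,b_{i-1}\}$ have total weight $3$, whereas $w_1(\mathcal{P}([2]))=4$ and your formula $2^{b_i-l}\binom{b_i+1}{l}$ gives $2$.) Your ``thus'' step also silently replaces $2^{b_i+1-l}$ by $2^{b_i-l}$, and the two errors do not cancel: $2^{b_i-l}\binom{b_i+1}{l}=2^{b_i-l}\bigl(\binom{b_i}{l}+\binom{b_i}{l-1}\bigr)$ is strictly \emph{smaller} than $w_l(Q_i)$ whenever $\binom{b_i}{l-1}>0$. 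Consequently the quantity $|\mathcal{B}|\,2^{-l}\binom{b+1}{l}$ that you then upper-bound is not an upper bound for $w_l(\mathcal{B})$ at all (already for $i=1$ you would need $2\binom{b}{l-1}\le\binom{b}{l-1}$), so the remaining estimates, even if correct, prove nothing about $w_l(\mathcal{B})$.

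The repair is short and is essentially the paper's proof. From the correct identity, $w_l(\mathcal{B})\le\bigl(\binom{b}{l}+2\binom{b}{l-1}\bigr)\sum_i 2^{b_i-l}=2^{-l}|\mathcal{B}|\bigl(\binom{b}{l}+2\binom{b}{l-1}\bigr)\le\frac{|\mathcal{B}|}{l!}\bigl(\frac{b}{2}\bigr)^l\bigl(1+\frac{2l}{b}\bigr)$, using $l!\binom{b}{l}\le b^l$ and $l!\binom{b}{l-1}\le l\,b^{l-1}$. Since $x\mapsto x^l+2lx^{l-1}$ is increasing and $b<\log_2|\mathcal{B}|$ in the technical case (because $|\mathcal{B}|\ge 2^b+1$), this is strictly less than $\bigl(1+\frac{2l}{\log_2|\mathcal{B}|}\bigr)\frac{|\mathcal{B}|}{l!}\bigl(\frac{\log_2|\mathcal{B}|}{2}\bigr)^l$. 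Note that this monotonicity argument also disposes of the bookkeeping issue you flagged: there is no need to compare $(N+1)^l$ with $(1+2l/N)N^l$ or to treat small $N$ separately.
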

\begin{proof}
\begin{align*}
w_l(\mathcal{B})&= \sum_i \binom{b_i}{l}2^{b_i-l}+ \binom{b_i}{l-1}2^{b_i-l+1}\\
&\leq \left( \binom{b}{l}+ 2\binom{b}{l-1}\right)\sum_i 2^{b_i-l}\\
&< (1+\frac{2l}{b}) \frac{b^l}{l!}|\mathcal{B}|\\
&< \left(1+\frac{2l}{\log_2 |\mathcal{B}|}\right)\frac{|\mathcal{B}|}{l!}\left( \frac{\log_2 |\mathcal{B}|}{2}\right)^l.
\end{align*}
\end{proof}
Theorem~\ref{mainthehorla} follows straightforwardly from here.
\end{proof}

As in the previous section we can use our result on $l$-fold weights to obtain information about the average number of sets containing a randomly chosen $l$-subset in a separating union-closed family. 

\begin{corollary}
Let $\mathcal{S}$ be a separating union-closed family, and let $X$ be an $l$-subset of $\Omega(\mathcal{S})$ chosen uniformly at random. Then
\[\mathbb{E}_Xd_{\mathcal{S}}(X) \geq {|\mathcal{S}|}^{\frac{1}{l+1}} {\left(\frac{\log_2 |\mathcal{S}|}{2(l+1)}\right)}^{1-\frac{1}{l+1}}+O\left({\left(\frac{|\mathcal{S}|}{\log_2|\mathcal{S}|}\right)}^{\frac{1}{l+1}}\right). \]
Moreover there exist arbitrarily large separating union-closed families $\mathcal{S}$ with
\[\mathbb{E}_Xd_{\mathcal{S}}(X) \leq 2{|\mathcal{S}|}^{\frac{1}{l+1}} {\left(\frac{\log_2 |\mathcal{S}|}{2(l+1)}\right)}^{1-\frac{1}{l+1}}+O\left({\left(\frac{|\mathcal{S}|}{\log_2|\mathcal{S}|}\right)}^{\frac{1}{l+1}}\right), \]
so this bound is asymptotically sharp except for a multiplicative factor of at most $2$.
\end{corollary}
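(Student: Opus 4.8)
The plan is to follow the proof of Corollary~\ref{avcol} almost verbatim, with the weight $w(\mathcal{S})$ replaced by the $l$-fold weight $w_l(\mathcal{S})$ and the elementwise degree replaced by the $l$-tuple degree $d_{\mathcal{S}}(X):=|\{A\in\mathcal{S}:X\subseteq A\}|$. The starting point is the identity
\[\mathbb{E}_X d_{\mathcal{S}}(X)=\frac{1}{\binom{n}{l}}\sum_{X\in\binom{\Omega(\mathcal{S})}{l}}d_{\mathcal{S}}(X)=\frac{w_l(\mathcal{S})}{\binom{n}{l}},\qquad n:=|\Omega(\mathcal{S})|,\]
which holds because each $A\in\mathcal{S}$ is counted once for each of the $\binom{|A|}{l}$ many $l$-subsets of $\Omega$ that it contains. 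This reduces both halves of the statement to the estimates on $w_l$ already proved.

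For the lower bound I would substitute the two bounds $w_l(\mathcal{S})>m\binom{\log_2 m/2}{l}$ (Lemma~\ref{reimerlweight}) and $w_l(\mathcal{S})\geq\binom{n}{l+1}$ (Lemma~\ref{seplweight}), where $m=|\mathcal{S}|$, to obtain
\[\mathbb{E}_X d_{\mathcal{S}}(X)\geq\max\left(\frac{m\binom{\log_2 m/2}{l}}{\binom{n}{l}},\ \frac{\binom{n}{l+1}}{\binom{n}{l}}\right)=\max\left(\frac{m\binom{\log_2 m/2}{l}}{\binom{n}{l}},\ \frac{n-l}{l+1}\right).\]
The first expression is decreasing in $n$ and the second increasing, so the minimum over admissible $n$ of this maximum is at least its common value at the (real) crossover point, integrality of $n$ only helping. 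Using the two-sided estimate $\binom{x}{l}=\frac{x^l}{l!}\bigl(1+O(1/x)\bigr)$ to simplify, the crossover occurs at $n\approx\bigl((l+1)m(\log_2 m/2)^l\bigr)^{1/(l+1)}$, and both sides there equal $m^{1/(l+1)}\bigl(\tfrac{\log_2 m}{2(l+1)}\bigr)^{1-1/(l+1)}$ up to a relative error $O(1/\log_2 m)$; multiplying that relative error by the main term yields exactly the additive error $O\bigl((m/\log_2 m)^{1/(l+1)}\bigr)$ in the statement.

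For the matching upper bound I would reuse the intermediate families $\mathcal{S}=\mathcal{B}\cup\mathcal{T}$ from the proof of Theorem~\ref{maintheor}, now choosing $m=2^r$ and $n=\bigl\lceil\bigl((l+1)2^r(r/2)^l\bigr)^{1/(l+1)}\bigr\rceil$ so as to sit at the crossover. For large $r$ the pair $(n,m)$ is satisfiable and the construction applies; by Theorem~\ref{mainthehorla} (equivalently, by the technical $l$-fold weight lemma in its proof bounding $w_l(\mathcal{B})$, together with a hockey-stick estimate $w_l(\mathcal{T})\leq\binom{n}{l+1}(1+o(1))$) it has
\[w_l(\mathcal{S})\leq\left(\binom{n}{l+1}+m\binom{\log_2 m/2}{l}\right)\bigl(1+O(1/\log_2 m)\bigr).\]
At the chosen value of $n$ the two summands are asymptotically equal, so $w_l(\mathcal{S})$ is at most $(2+o(1))\,m\binom{\log_2 m/2}{l}$; dividing by $\binom{n}{l}$ and expanding the binomial coefficients as before gives the stated upper bound on $\mathbb{E}_X d_{\mathcal{S}}(X)$, with an error term of the same shape.

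I expect the only real work to be bookkeeping: tracking the $O(1/x)$ corrections in the binomial-coefficient expansions through the optimization over $n$ so that the additive error lands in the advertised form, and checking that the integer $n$ actually used in the construction forms a satisfiable pair with $m$ and is close enough to the true optimum that no loss beyond the factor $2+o(1)$ creeps in. Both steps are routine; all the structural content is already contained in Lemmas~\ref{reimerlweight} and~\ref{seplweight} and in Theorem~\ref{mainthehorla}.
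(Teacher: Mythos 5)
Your proposal is correct and follows essentially the same route as the paper, whose own proof is just the one-line assertion that the corollary is ``instant from'' Lemmas~\ref{reimerlweight} and~\ref{seplweight} and Theorem~\ref{mainthehorla}; you have simply filled in the identity $\mathbb{E}_X d_{\mathcal{S}}(X)=w_l(\mathcal{S})/\binom{n}{l}$, the optimization over $n$ at the crossover point, and the reuse of the $\mathcal{B}\cup\mathcal{T}$ construction, all of which is exactly the intended argument.
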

\begin{proof}This is instant from Lemma~\ref{reimerlweight}, Lemma~\ref{seplweight} and Theorem \ref{mainthehorla}.
\end{proof}

We end our paper with the natural generalisation of Conjecture~\ref{strongconje}.

\begin{conjecture}
Let $l$ be an integer. Suppose $n=n(m)$ satisfies
\[n=cm^{1/l+1} \left(\log_2 m\right)^{1-1/(l+1)}  (1+o(1))\] 
for some $c=c(m)$. Then if $\mathcal{S}$ is an $n$-separating union-closed family of size $m$, its $l$-fold weight satisfies

\[w_l(\mathcal{S}) \geq m (\log_2 m)^l \left(\frac{1}{l!2^l}+ \frac{c^{l+1}}{(l+1)!}\right)(1+o(1)).\]

\end{conjecture}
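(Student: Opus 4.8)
The plan is to reduce the general-$l$ statement to the case $l=1$ (which is Conjecture~\ref{strongconje}) by exactly the device already used to deduce Theorem~\ref{mainthehorla} from Theorem~\ref{maintheor}: Jensen's inequality on $x\mapsto\binom{x}{l}$ together with the Vandermonde bound $\binom{x+y}{l}\geq\binom{x}{l}+\binom{y}{l}$ for nonnegative integers $x,y$, combined with the $l$-fold staircase bound of Lemma~\ref{seplweight}. So the real content is the single assertion: if $\mathcal{S}$ is $n$-separating and union-closed with $|\mathcal{S}|=m$ and $n=c\sqrt{m\log_2 m}\,(1+o(1))$, then $w(\mathcal{S})\geq\frac{1+c^2}{2}m\log_2 m\,(1+o(1))$. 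We already know $w(\mathcal{S})\geq\binom{n}{2}=\frac{c^2}{2}m\log_2 m\,(1+o(1))$ from Theorem~\ref{sepweight} and $w(\mathcal{S})\geq\frac12 m\log_2 m$ from Reimer's Theorem; what must be shown is that in the critical regime these two lower bounds are, up to lower-order terms, \emph{additive} rather than merely yielding their maximum.

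My first attempt would be a charging argument. Fix a set $B\subseteq\Omega$ of ``base'' coordinates with $|B|=b=(1+o(1))\log_2 m$, and put $T=\Omega\setminus B$. For each $A\in\mathcal{S}$ split $|A|=|A\cap B|+|A\cap T|$ (for $l\geq 2$ one would instead use $\binom{|A\cap B|}{l}+\binom{|A\cap T|}{l}\leq\binom{|A|}{l}$), so that $w(\mathcal{S})\geq\sum_A|A\cap T|+\sum_A|A\cap B|$. For the first sum, the sets $A_i$ furnished by Lemma~\ref{sepstruc} satisfy $|A_i\cap T|\geq (n-i)-b$, whence $\sum_A|A\cap T|\geq\binom{n}{2}-(n-1)b=\binom{n}{2}-o(m\log_2 m)$, since $nb=o(m\log_2 m)$ in the critical regime. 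The heart of the matter is the second sum: one needs a choice of $B$ for which $\sum_A|A\cap B|=\sum_{Y\subseteq B}m_Y|Y|\geq\frac12 m\log_2 m\,(1-o(1))$, where $m_Y=|\{A\in\mathcal{S}:A\cap B=Y\}|$. The useful structural input here is that the fibres are themselves union-closed: for fixed $Z\subseteq T$ the family $\{A\cap B:A\cap T=Z,\ A\in\mathcal{S}\}$ is a union-closed subfamily of $\mathcal{P}(B)$, so Reimer's Theorem bounds $\sum_{A:A\cap T=Z}|A\cap B|$ from below in terms of the fibre size, and the projection $\mathcal{S}|_B$ is moreover $b$-separating. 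One then wants to choose $B$ adaptively --- via an averaging argument over candidate $b$-subsets, or via an extremal argument exploiting the minimality of $w(\mathcal{S})$ --- so that these fibre bounds sum to the full Reimer amount.

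The step I expect to be the main obstacle is precisely this choice of a good base $B$. There is no reason in general for a separating union-closed family to contain $\approx\log_2 m$ coordinates carrying essentially all of its ``Reimer weight''; the two-block constructions of Section~3 have this feature, but a general family may spread its weight far more evenly, so a naive pigeonhole over $b$-subsets will not suffice and one must use that $\mathcal{S}$ has \emph{minimal} weight. A cleaner route, which I would pursue in parallel, is to seek a two-parameter refinement of Reimer's shifting argument: Reimer's compressions preserve $|\mathcal{S}|$ and do not increase $w(\mathcal{S})$, and one would run them while tracking a separation-type invariant, aiming to show that a fully compressed $n$-separating union-closed family of size $m$ must be, up to relabelling and lower-order perturbations, one of the families $\mathcal{B}\cup\mathcal{T}$ of Section~3, whose weight we have already computed to match the conjectured bound. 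The technical difficulty there is that a compression can identify two coordinates and so destroy separation; designing a separation-preserving compression, or bounding the number of coordinates thereby lost, is where I expect the real work to lie. Once the $l=1$ case is secured, the general statement should follow by the convexity argument of Lemma~\ref{reimerlweight} and the bound of Lemma~\ref{seplweight}, leaving only routine asymptotics.
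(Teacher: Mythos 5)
This statement is an open conjecture: the paper offers no proof of it (nor of its $l=1$ special case, Conjecture~\ref{strongconje}), so there is no argument of the author's to compare yours against. What you have written is a research plan rather than a proof, and you candidly identify its unresolved core yourself: in the charging argument, the existence of a base $B$ of size $\approx\log_2 m$ capturing essentially all of the Reimer weight is precisely the additivity assertion that constitutes the entire content of the conjecture, and neither the averaging-over-$B$ route nor the separation-preserving-compression route is carried out. So there is a genuine gap, and it sits exactly where you predicted it would.

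One further point worth flagging: the opening reduction of the general-$l$ statement to the $l=1$ case does not go through as stated. In the critical regime for $l\geq 2$ one has $n\sim c\,m^{1/(l+1)}(\log_2 m)^{l/(l+1)}=o(\sqrt{m\log_2 m})$, so Conjecture~\ref{strongconje} (even if granted) yields only $w_1(\mathcal{S})\geq\frac{1}{2}m\log_2 m\,(1+o(1))$, i.e.\ nothing beyond Reimer; feeding this into Jensen recovers only the $\frac{1}{l!2^l}$ term and loses $\frac{c^{l+1}}{(l+1)!}$ entirely. Likewise, combining Lemma~\ref{reimerlweight} with Lemma~\ref{seplweight} gives the \emph{maximum} of the two contributions, whereas the conjecture asserts their \emph{sum}. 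So the additivity phenomenon has to be established afresh at each $l$; the splitting $\binom{|A\cap B|}{l}+\binom{|A\cap T|}{l}\leq\binom{|A|}{l}$ is the right vehicle for it, but the hard step --- exhibiting a suitable $B$ --- is the same open problem in every case.
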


\end{document}